\documentclass[11pt,a4paper]{article}

\usepackage{authblk}

\title{On Global Hyperbolicity of Spacetimes: Some Recent Advances and Open Problems}
\title{On Global Hyperbolicity of Spacetimes: Topological and Functional Analytic Aspects}
\title{On Global Hyperbolicity of Spacetimes: Recent Connections Between Topology and Functional Analysis}
\title{On Global Hyperbolicity of Spacetimes: Connecting Topology and Functional Analysis}
\title{On Global Hyperbolicity of Spacetimes: Topology Meets Functional Analysis}

\title{On Global Hyperbolicity of Spacetimes: \\ Topology Meets Functional Analysis}
\author[1]{Felix Finster\footnote{finster@ur.de}}
\author[2]{Albert Much\footnote{much@itp.uni-leipzig.de}} 
\author[3]{Kyriakos Papadopoulos\footnote{kyriakos@sci.kuniv.edu.kw}} 
\affil[1]{Fakult\"at f\"ur Mathematik,
	Universit\"at Regensburg,
	D-93040 Regensburg }
\affil[2]{Institut f\"ur Theoretische Physik\\ Universit\"at Leipzig\\ D-04103 Leipzig} 
\affil[3]{Department of Mathematics, Kuwait University,   Safat 13060, Kuwait}
\date{August 2021}

\usepackage{float}            
\usepackage[T1]{fontenc} 
\usepackage[cmex10]{amsmath}  
\usepackage{amstext}          
\usepackage{mathrsfs}
\usepackage{amsfonts}         
\usepackage{amssymb}          
\usepackage{amsthm}    
\usepackage{setspace}
\usepackage{bm}               
\usepackage{enumerate}        
\usepackage[bottom]{footmisc} 
\usepackage{array}            
\usepackage{textcomp}         
\usepackage{pdfpages}         
\usepackage{parskip}          
\usepackage[right]{eurosym}   
\usepackage{xcolor}           
\usepackage[square,numbers]{natbib}	  
\usepackage{tikz}
\usetikzlibrary{matrix}

\usepackage{amsmath}  
\usepackage[colorlinks,pdfpagelabels,pdfstartview = FitH,bookmarksopen = true,bookmarksnumbered =
true,linkcolor = black,plainpages = false,hypertexnames = false,citecolor = black]{hyperref}
\usepackage[a4paper,textwidth=17cm,textheight=24cm,headheight=14pt]{geometry}
\allowdisplaybreaks

\setlength{\textheight}{23cm}

\sloppy                       

%
%

%

\newtheorem{definition}{Definition}[section]
\newtheorem{theorem}[definition]{Theorem}
\newtheorem{proposition}[definition]{Proposition}

\newcommand{\R}{\mathbb{R}}

\newcommand{\I}{\mathbb{I}}


\usepackage{fancyhdr}
\pagestyle{fancy}
\fancyhf{}
\fancyhead[L]{\rightmark}
\fancyhead[R]{\thepage}

\numberwithin{equation}{section} 

\begin{document}
	\maketitle \abstract{This chapter is an up-to-date account of results on globally hyperbolic spacetimes,  and serves several purposes. We begin with the exposition of results from a foundational level, where the main tools are order theory and general topology, we continue with results of a more geometric nature, and we conclude with results that are related to current research in theoretical physics. In each case, we list a number of open questions and     formulate, for a class of spacetimes, an interesting connection between global  hyperbolicity of a manifold and the   geodesic completeness of its corresponding space-like surfaces. This connection is substantial for the proof of essential self-adjointness of a   class of pseudo differential operators, that stem from relativistic quantum field theory. }
	\tableofcontents

	\section{Introduction} 
	
	While in Riemannian geometry it is natural to consider geodesic completeness, in Lorentzian geometry -- and in particular in spacetime geometry -- it is more sensible to consider global hyperbolicity as a condition that two events, that are chronologically related, can be joined by a maximal timelike geodesic. This is due to the validity of the Hopf-Rinow theorem, whose naive analogue fails to exist in Lorentzian geometry (see \cite{Galloway2}).
	\\\\
	The advantage of global hyperbolicity is its multifaceted  perspectives. For example, it can be examined from an order-theoretic and topological  perspective, as the strongest of the causal conditions on a spacetime (the causal diamonds are compact - see \cite{Penrose-difftopology}) or from a purely geometric  viewpoint (existence of a Cauchy surface - see \cite{Hawking-Ellis}). The equivalence of such seemingly different statements is what supplies mathematical physics with mathematical tools and physical insights. The importance of such an interdisciplinary work cannot be overstated; in mathematical  physics this is significant in order to construct quantum field theories in curved spacetimes in a mathematically rigorous fashion. In particular, the input that comes from a topological  direction is fruitful   towards a more abstract formulation. From a mathematical point of view, it is important to see the applications of general theorems and to examine and push the scope of their validity. 
	\\\\
	It is worth mentioning that there is a theoretical background  behind the notion of globally hyperbolic spacetimes that uses tools from domain theory, and interconnects mathematical physics with the foundations of mathematics. A recent example is  reference \cite{Minguzzi2}, where the authors showed that globally hyperbolic spacetimes belong to a category\footnote{In the frame of the field of mathematics called Category Theory; see, for example, \cite{Category}.}, which is equivalent to integral domains. Integral domains are partially ordered sets which carry, intrinsically, notions of completeness and approximation, and were used in theoretical computer science (see \cite{Scott}). The equivalence of causality between events to an order on regions of a spacetime suggests that questions about spacetime can be translated to questions on domain theory. This seems of  great interest, because the type of domains  called $\omega$-continuous are the ideal completions of countable abstract bases, so that a spacetime can be reconstructed (in a purely order-theoretical manner) from a dense discrete set. In particular, this suggests that a globally hyperbolic spacetime is linked to  a discrete object. 
	\\\\
	Strongly causal spacetimes, and thus globally hyperbolic ones which are strongly causal by definition,  are conformal to spacetimes in which all null geodesics are complete, that is, their null geodesics can be extended to infinite values of their affine parameters, as the following theorem suggests (see \cite{Clarke}). 
	\begin{theorem} 
		If $M$ is a strongly causal spacetime with metric $g$, then there is a $C^\infty$-function $\Omega$, such that null geodesics with respect to $\Omega^2g$ are complete. 
	\end{theorem}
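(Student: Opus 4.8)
The plan is to exploit the conformal invariance of the null cone. Under a rescaling $\tilde g=\Omega^2 g$ the $g$-null geodesics remain null pregeodesics of $\tilde g$: writing $k=\dot\gamma$ for a $g$-affinely parametrized null geodesic, the transformation law of the Levi-Civita connection gives $\tilde\nabla_k k=2\,k(\ln\Omega)\,k$, because the term $g(k,k)\,\nabla\ln\Omega$ drops out as $k$ is null. Hence $\gamma$ is again a geodesic after reparametrization, and a short computation shows that the $\tilde g$-affine parameter $\tilde s$ obeys $d\tilde s/ds=\Omega^2$ up to an affine normalization. The $\tilde g$-null geodesic through $\gamma$ is therefore complete precisely when $\int\Omega^2\,ds$ diverges at both ends of the maximal $g$-affine interval. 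First I would record this reduction: it remains to manufacture a single positive $\Omega\in C^\infty(M)$ for which this integral diverges along every inextendible $g$-null geodesic.

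Here strong causality enters through a complete auxiliary Riemannian metric. By the Nomizu--Ozeki theorem $M$ carries a complete Riemannian metric $h$. In a strongly causal spacetime no inextendible causal curve can be imprisoned in a compact set --- an accumulation point would admit an arbitrarily small causally convex neighbourhood that the curve must re-enter, contradicting causal convexity --- and in fact such a curve eventually leaves every compact set. Combined with $h$-completeness this forces every inextendible null geodesic to have infinite $h$-length, so I may reparametrize it by $h$-arclength $\ell\in[0,\infty)$. Writing $k=d\gamma/ds$ for the $g$-affine tangent and $\mu(\ell)=\|k\|_h$, one has $ds/d\ell=1/\mu$ and therefore
\[\int\Omega^2\,ds=\int_0^\infty\frac{\Omega^2}{\mu}\,d\ell.\]

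The key quantitative step is to control how fast $\mu$ can grow. Let $w=d\gamma/d\ell$ be the $h$-unit, $g$-null tangent and let $D=\nabla^h-\nabla^g$ be the smooth difference tensor of the two connections; since $k$ is $g$-parallel, differentiating $\|k\|_h^2$ yields
\[\frac{d}{d\ell}\ln\mu=h\bigl(D(w,w),w\bigr)\le c(\gamma(\ell)),\]
where $c(x)$ is the continuous function obtained by maximizing $|h(D(w,w),w)|$ over the compact set of $h$-unit $g$-null vectors at $x$. Thus $\mu$ grows at most exponentially, at a rate bounded on each compact set. The remaining task --- and this is where I expect the main difficulty --- is to choose $\Omega$ growing rapidly enough ``at infinity of $M$'' that $\Omega^2/\mu$ fails to be integrable along every inextendible null geodesic. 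The obstacle is one of uniformity: $\mu$ and its exponential distortion depend on the individual geodesic and on the affine normalization, whereas $\Omega$ is a single function on $M$. I would resolve this by fixing a proper smooth exhaustion $f:M\to[0,\infty)$, setting $\Omega=\omega(f)$ for a rapidly increasing $\omega$, and estimating the passage through each shell $\{N\le f\le N+1\}$ separately: each crossing has $h$-length bounded below, while the accumulated exponential factor on it is governed by the compact-set bound $c\le C_{N+1}$. Choosing $\omega$ to outgrow these factors makes the shell contributions sum to infinity, using crucially that the escape property drives every inextendible null geodesic through infinitely many shells.

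Finally I would address smoothness and the two ends symmetrically. Building $\Omega$ as a function of the exhaustion already yields a $C^\infty$ factor, and any patching of local choices can be smoothed by a partition of unity subordinate to $\{f<N+1\}$. The past-inextendible ends are handled verbatim, since the escape property and the bound on $\mu$ are time-symmetric. This produces $\Omega\in C^\infty(M)$ with $\int\Omega^2\,ds=\infty$ at both ends of every inextendible $g$-null geodesic, which by the first step is exactly the completeness of the null geodesics of $\Omega^2 g$. The genuinely hard point, which I would isolate as a separate lemma, is the uniform growth estimate that lets one conformal factor dominate the per-geodesic reparametrization simultaneously in all null directions.
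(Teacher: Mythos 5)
You should first note that the paper itself does not prove this statement: it is quoted from Clarke's 1970 paper, so the comparison below is with the classical argument. Your reduction is sound --- the reparametrization law $d\tilde s=\mathrm{const}\cdot\Omega^2\,ds$ for null geodesics, the reduction of completeness to divergence of $\int\Omega^2\,ds$ at both ends, the complete auxiliary metric $h$ via Nomizu--Ozeki, the escape property from non-imprisonment, and the identity $\tfrac{d}{d\ell}\ln\mu=h\bigl(D(w,w),w\bigr)$ are all correct. The gap sits exactly at the point you yourself flag, and your proposed resolution does not close it. The bound $c\le C_{N+1}$ controls the \emph{rate} of growth of $\ln\mu$, but what you must bound is $\ln\mu$ itself at the moment the geodesic crosses the shell $\{N\le f\le N+1\}$, namely $\int_0^{\ell}c(\gamma)\,d\ell'$, which is $C_{N+1}$ times the total $h$-arclength already spent in $\{f\le N+1\}$. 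Non-imprisonment guarantees only that this arclength is finite for each individual geodesic; it provides no bound depending on $N$ alone, so there is no sequence of ``accumulated factors'' for $\omega$ to outgrow. Nor does a long sojourn in a compact set help on the other side: there $1/\mu$ may decay exponentially in $\ell$, so the contribution of an arbitrarily long sojourn to $\int\Omega^2\mu^{-1}\,d\ell$ remains bounded. Symptomatically, your argument invokes strong causality only through non-imprisonment, while the theorem's classical proof uses it more substantially.

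The missing ingredient is a \emph{uniform} statement: for every compact $K\subset M$ there is $T_K<\infty$ such that every null $g$-geodesic spends $h$-arclength at most $T_K$ in $K$. This is where strong causality enters essentially: cover $K$ by finitely many causally convex open sets whose compact closures lie in convex normal neighbourhoods; causal convexity forces each such set to be traversed at most once, in a single parameter interval, and compactness inside a convex normal neighbourhood bounds the $h$-length of any geodesic segment it contains (such a segment is the unique geodesic joining its endpoints, and its length extends to a continuous, hence bounded, function on $\overline U\times\overline U$). With this lemma your shell construction does close: $\mu\le\exp(C_{N+1}T_{N+1})$ on the first crossing of the shell, so taking $\omega(N)^2\ge\delta_N^{-1}\exp(C_{N+1}T_{N+1})$, with $\delta_N$ the $h$-distance between the level sets, makes every shell contribute at least $1$ to the integral, and the past-directed case is identical. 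Clarke's argument is organized directly around such a locally finite causally convex cover (each inextendible null geodesic meets infinitely many members, each exactly once, and $\Omega$ is chosen sufficiently large on each) rather than around shells of an exhaustion, but the uniformity input is the same. As written, your proof is incomplete at this one point.
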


	As for timelike geodesic completeness, even if there is no obvious relation to global hyperbolicity, there has been shown that under particular sectional or Ricci curvature conditions, if a spacelike hypersurface is future-timelike geodesically complete, then it is globally hyperbolic; see \cite{Galloway2}. 
	Next, we turn to the spacelike geodesic completeness of the Cauchy surface of a globally hyperbolic manifold. \\\\  An interesting result relating     spacelike geodesic completeness to global hyperbolicity was given in \cite[Proposition~5.3]{K78}. The author proved that an ultra-static spacetime $(M,g)$ is globally hyperbolic if and only if the global Cauchy surface is geodesically complete.  The physical advantage of working in globally hyperbolic manifolds is that the wave equation (in what follows referred to as the Klein-Gordon equation), i.e.\ the differential equation describing the dynamics of a spin zero particle, is well-posed. In particular, given smooth initial data on the Cauchy hypersurface, there exists a corresponding global smooth solution. For these manifolds, it is in general possible to rewrite the second order differential equation as a first order system of equations; this is referred to as the Hamiltonian formulation.  The generator of the time evolution can then be written as a $2 \times 2$ matrix differential operator. In order to define a corresponding quantum field theory using the   Hamiltonian formulation,  a necessary requirement   is the essential  self-adjointness of weighted Laplace-Beltrami operators stemming from the Klein-Gordon equation. The essential self-adjointness of these operators depends  on the spacelike geodesic completeness of the  Cauchy surfaces.  In what follows, we   expand more on the obtained results.

	\section{Some Preliminaries}

	\subsection{Elements of Domain Theory and General Topology} 
	
	In this Section we   list some definitions, which we need in section 3, where we   discuss   a link between a certain type of partially ordered sets (posets) with globally hyperbolic spacetimes.
	\\\\
	Throughout the text, the {\em power set} of a set $X$ is denoted by $\mathcal{P}(X)$, and it is defined as the set of all subsets of $X$.

	\begin{definition}\label{D1}
		If $(A,\prec)$ is a poset, then:
		\\
		\begin{enumerate}
			\item a nonempty set $D \in \mathcal{P}(A)$ is called {\em directed}, if for every $x,y \in D$, there exists $z \in D$, such that both $x \prec z$ and $y \prec z$.

			\item (dually to (1)) a nonempty set $F \in \mathcal{P}(A)$ is called {\em filtered}, if for every $x,y \in F$, there exists $z \in F$, such that $z \prec x$ and $z \prec y$.

			\item  a nonempty set $L \in \mathcal{P}(A)$ is called {\em lower set}, if for every $x \in A$ and for every $y \in L$,  $x \prec y$ implies that $x \in L$.

			\item $(A,\prec)$ is a {\em dcpo} (directed, complete poset), if every directed set has a supremum, that is, if $D \in \mathcal{P}(A)$ is directed, then $D$ has a least upper bound.

			\item $(A,\prec)$ is {\em continuous}, if there exists $B \in \mathcal{P}(A)$, such that $B \cap \downarrow x$ contains a directed set with supremum, for all $x \in A$, where $\downarrow x = \{a \in A : a \ll x\}$ and where $\ll$ is defined as $x \ll y$, if for all directed sets $D$ with supremum, $y \prec \sup D$ implies that there exists $d \in D$, such that $x \prec d$.
			
			\item $(A,\prec)$ is {\em bicontinuous}, if it is continuous and for every $x,y \in A$, $x \ll y$ if for every filtered set $F \in \mathcal{A}$ with infimum, $\inf F \prec x$ implies that there exists $f \in F$, such that $f \prec y$ and also, for every $x \in A$, $\uparrow  x$ is filtered, with infimum $x$ (and where, dually to $\downarrow x$, $\uparrow x = \{a \in A : x \ll a\}$).
			
			\item if $(A,\prec)$ is bicontinuous, then its {\em interval topology} is the topology that has a basis that consists of the intervals $(a,b):=\{x \in A : a \ll x \ll b\}$.
			
			\item $(A,\prec)$ is {\em globally hyperbolic}, if it is bicontinuous and the closure of the intervals $(a,b)$, that is $\overline{(a,b)} := [a,b] = \{x : a \preceq x \preceq b\}$, is compact in the interval topology on $A$.
			
			\item A {\em domain} is a continuous dcpo. An example of an {\em interval domain} is the domain of compact intervals of the real line.
			
			\item Let $\Uparrow x = \{a \in D: x \ll a\}$, where $D$ is a dcpo. Then the class $\{\Uparrow x : x \in D\}$ forms a base for the {\em Scott topology} on a continuous poset.
			
		\end{enumerate}
	\end{definition} 
	
	\subsection{Causal Relations in Spacetime}
	It is standard (see \cite{Penrose-difftopology}) in a spacetime to consider two partial orders, the {\em chronological} order $\ll$, which is considered to be irreflexive if one wants to avoid closed timelike curves, and
	the {\em causal order} $\prec$ which is reflexive, and are defined, respectively,
	as follows:\\ 
	\begin{enumerate}
		
		\item $x \ll y$ if and only if $y \in C_+^T(x)$ and
		\\
		\item $x \prec y$ if and only if $y \in C_+^T(x) \cup C_+^L(x)$
		
		where $C_+^T(x)$ denotes the future {\em time-cone} of an event $x$, $C_+^L(x)$ its future {\em light-cone} and $C_+^T(L) \cup C_+^L(x)$ its future {\em causal-cone} (for an analytical exposition of time-coordinates, space-coordinates and the spacetime ``metric'', that leads to the definition of these cones see \cite{Hawking-Ellis}; for a detailed exposition of these definitions on Minkowski space see \cite{Waleed}).

	\end{enumerate}$\,$\\
	In addition, the reflexive relation {\em horismos} $\rightarrow$ is defined as $x \rightarrow y \textrm{ iff } x \prec y \textrm{ but not } x \ll y $. 	Naturally, in a spacetime $M$ one can define the sets $I^+(x) = \{y \in M :x \ll y \}$ and $J^+(x) = \{y \in M: x \prec y\}$; $I^-(x)$ and $J^-(x)$ are defined dually. 
	\\\\	
	In Section 3, we will  examine the relation between $\ll$ in Definition \ref{D1} (5) and the chronological order $\ll$ that will be defined in 2.2 (1). Furthermore, the interval topology of Definition 2.1 (7), defined for bicontinuous posets, will be called the {\em Alexandrov topology} $T_A$ for a spacetime $M$, whenever $\ll$ is the chronological order on $M$. 
	
	\subsection{Weighted Riemannian Manifolds and All That}
	In the following we define  a  weighted  manifold, \cite[Chapter 3.6, Definition 3.17]{AG1}.
	
	\begin{definition}
		A triple $(\Sigma,	g_{\Sigma},\mu)$ is called a \emph{weighted manifold}  if $(\Sigma,	g_{\Sigma})$ is a Riemannian manifold and $\mu$ is a measure on $\Sigma$ with a smooth and everywhere positive density function $\rho$, i.e., $d\mu=\rho\,d\Sigma$.   	A \emph{weighted Hilbert space}, denoted by $L^2(\Sigma, \mu)$, is given  as the space of all  square-integrable functions  on the manifold $\Sigma$ with respect to the measure $\mu$.  	The corresponding  \emph{weighted Laplace-Beltrami operator} (also called the Dirichlet-Laplace operator),  denoted by $\Delta_{g_{\Sigma}, \mu}$, is given by  \begin{equation}\label{eq:wlbop}\Delta_{g_{\Sigma}, \mu}=\frac{1}{\rho \sqrt{|g_{\Sigma}|}}\partial_i(\rho\sqrt{|g_{\Sigma}|}\,g_{\Sigma}^{ij}\partial_j).
		\end{equation} 
	\end{definition}
	
	In regards to the upcoming discussion on essential self-adjointness,  we need an essential result that was given in~\cite[Theorem 1]{S01}.  
	\begin{theorem}\label{T5}
		Let the Riemannian manifold $(\Sigma,g_{\Sigma})$ be complete,  $\mu$ be a measure on $\Sigma$ with a smooth and everywhere positive density function $\rho$ and suppose that  the potential  $Y\in L^2_{loc}(\Sigma)$
		point-wise. Furthermore, let $A\in\Lambda^1_{(1)}(\Sigma)$ and let the   operator  $L=-{\Delta}^{A}_{{g_{\Sigma},\mu}}+Y$, where the minimally coupled Laplace Beltrami operator is defined by
		\begin{align*}
			-{\Delta}^{A}_{{g_{\Sigma},\mu}} :=-(\rho\sqrt{|g_{\Sigma}|})^{-1}(\partial-A)^{*}_{i}(   \sqrt{|g_{\Sigma}|}  \, \rho\, g^{ij}_{\Sigma}\,    (\partial-A)_{j} ) 
		\end{align*}	 
		be 	semi-bounded from below. Then, the operator    $L$ is an essentially self-adjoint operator on $C_0^{\infty}(\Sigma)$.
	\end{theorem}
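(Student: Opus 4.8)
The plan is to reduce the statement to a Liouville-type vanishing assertion and then to exploit completeness through a cutoff argument in which the semi-boundedness of $L$ is used, crucially, only in its integrated (form) sense. Since $L$ is symmetric and bounded below on $C_0^{\infty}(\Sigma)$, essential self-adjointness is equivalent to density of the range of $L+\kappa$ for a single constant $\kappa$ with $L+\kappa\ge 1$, i.e.\ to triviality of $\ker\big((L+\kappa)^{*}\big)$. Absorbing $\kappa$ into $Y$, I may assume $\langle L\phi,\phi\rangle\ge\|\phi\|_{L^2(\Sigma,\mu)}^2$ for all $\phi\in C_0^{\infty}(\Sigma)$, so it suffices to prove that any $u\in L^2(\Sigma,\mu)$ satisfying $Lu=0$ in the distributional sense must vanish.

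Here the completeness of $(\Sigma,g_{\Sigma})$ enters: by the Hopf--Rinow theorem I can build Lipschitz cutoffs $\chi_n$, $0\le\chi_n\le 1$, equal to $1$ on the geodesic ball $B(o,n)$, supported in $B(o,2n)$, with $|\nabla\chi_n|\le C/n$; completeness is exactly what guarantees compact supports together with controllably small gradients. Writing $\nabla_A:=\partial-A$ for the minimally coupled gradient, I would pair the equation with two different quantities. Testing $Lu=0$ against $\chi_n^2 u$ and integrating by parts yields the energy identity
\[
\int_{\Sigma}\chi_n^2\,|\nabla_A u|^2\,d\mu
+2\,\mathrm{Re}\!\int_{\Sigma}\chi_n\,\overline{\nabla_A u}\cdot(\nabla\chi_n)\,u\,d\mu
+\int_{\Sigma}Y\,\chi_n^2\,|u|^2\,d\mu=0 ,
\]
while the form bound $\langle L\phi,\phi\rangle\ge\|\phi\|^2$ applied to $\phi=\chi_n u$, after expanding $\nabla_A(\chi_n u)=\chi_n\nabla_A u+(\nabla\chi_n)u$, produces an inequality sharing the \emph{same} magnetic-energy, cross, and potential terms, plus the extra term $\int_{\Sigma}|\nabla\chi_n|^2|u|^2\,d\mu$. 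Subtracting the energy identity cancels all shared terms and leaves
\[
\int_{\Sigma}\chi_n^2\,|u|^2\,d\mu
\;\le\;\int_{\Sigma}|\nabla\chi_n|^2\,|u|^2\,d\mu
\;\le\;\frac{C^2}{n^2}\,\|u\|_{L^2(\Sigma,\mu)}^2 .
\]
Letting $n\to\infty$, with $\chi_n\to 1$ monotonically, forces $\|u\|_{L^2(\Sigma,\mu)}=0$. The decisive point is that the possibly very negative potential $Y$ is never estimated on its own: it occurs identically in both relations and is removed by the cancellation, which is precisely why form (rather than pointwise) semi-boundedness suffices.

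The hard part is not this algebra but the regularity needed to run it. Because $Y$ is assumed only in $L^2_{loc}(\Sigma)$, a distributional solution of $Lu=0$ need not lie in $H^2_{loc}(\Sigma)$, and it is not even immediate that $\nabla_A u\in L^2_{loc}$; consequently both the integration by parts in the energy identity and the claim that $\chi_n u$ lies in the form domain (the closure of $C_0^{\infty}(\Sigma)$ in the form norm) require justification. This is where I expect the real work to lie, and where Kato's distributional inequality together with the diamagnetic inequality enters. From the equation one has $\Delta^{A}_{g_{\Sigma},\mu}u=Yu\in L^1_{loc}(\Sigma)$, and Kato's inequality bounds the scalar Laplacian $\Delta_{g_{\Sigma},\mu}|u|$ from below by $Y|u|$; this both reduces the magnetic problem to controlling the scalar function $|u|$ and supplies the a priori $L^1_{loc}$/$L^2_{loc}$ estimates that make the manipulations above rigorous, while local elliptic regularity of the smooth principal part furnishes the remaining interior control. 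The essential subtlety --- and the reason the result goes beyond Kato's classical theorem for pointwise nonnegative potentials --- is that $Y$ is controlled only in the form sense, so its negative part must be absorbed by the magnetic gradient term rather than discarded; the cancellation described above is exactly the device that makes form-semi-boundedness enough.
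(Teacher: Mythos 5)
The paper itself offers no proof of this statement: it is imported verbatim (up to notation) from Shubin \cite[Theorem~1]{S01}, so the only meaningful comparison is with the proof in that reference. Measured against it, your outline reproduces the correct global strategy --- reduction to triviality of $\ker\big((L+\kappa)^*\big)$, completeness-supplied Lipschitz cutoffs $\chi_n$ with $|\nabla\chi_n|\le C/n$, and the cancellation between the energy identity tested against $\chi_n^2 u$ and the quadratic-form bound evaluated at $\chi_n u$ --- and the algebra of that cancellation is correct; it is indeed the mechanism by which form-semiboundedness (rather than pointwise positivity) suffices. The problem is that the step you explicitly defer (``this is where I expect the real work to lie'') is not a routine regularity check to be discharged in a sentence: it is essentially the entire content of the theorem beyond the classical locally-bounded-potential case, and the tools you name would not close it as described. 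Concretely, you need (i) that a distributional solution $u\in L^2(\Sigma,\mu)$ of $Lu=0$ satisfies $\nabla_A u\in L^2_{loc}$ and $Y|u|^2\in L^1_{loc}$, so that your energy identity is a legitimate integration by parts and not a formal computation; and (ii) that $\chi_n u$ lies in the closure of $C_0^\infty(\Sigma)$ in the form norm \emph{and} that the closed form there is still given by the integral expression $\int|\nabla_A(\chi_n u)|^2\,d\mu+\int Y\chi_n^2|u|^2\,d\mu$ --- without (ii) the inequality you subtract from the energy identity is simply not available for the non-smooth function $\chi_n u$. Kato's inequality, as you invoke it, yields $\Delta_{g_\Sigma,\mu}|u|\ge Y|u|$ distributionally; that is a statement about the scalar function $|u|$ and gives no $L^2_{loc}$ control on $\nabla_A u$, while the diamagnetic inequality is mentioned but never actually deployed in your argument. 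In \cite{S01} precisely this step is the technical heart of the paper, handled by a separate and delicate local argument (regularization adapted to the magnetic covariant derivative together with uniform local energy estimates extracted from the semi-boundedness itself).

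So: the skeleton is right and matches the known proof, but the load-bearing lemma is missing and merely gestured at. To make the argument complete you must either prove or explicitly import Shubin's local regularity statement, or else weaken the theorem to $Y\in L^\infty_{loc}$, in which case standard elliptic theory gives $u\in H^1_{loc}$ with $\nabla_A u\in L^2_{loc}$ and your cutoff computation then closes as written.
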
 	 Here   we write $f\in L^2_{loc}(\Sigma)$  for 
	a local $L^2(\Sigma)$ function $f$  that is an element of the   Hilbert space $L^2(\Sigma)$ on every compact subset of the manifold $\Sigma$, and we denote $\Lambda^p_{(k)}(\Sigma)$ as  the set of all $k$-smooth
	(i.e.\ of the class $C^k$) complex-valued $p$-forms on $\Sigma$.  Note that the above result is (apart from the potential term and the gauge field) a reformulation of the result of \cite{STR83} for weighted Riemannian manifolds.  
	\section{Globally Hyperbolic Spacetimes}
	
	Classically, when we talk about global hyperbolicity, in terms of the causal structure of a spacetime, we consider the following definition  (see \cite{Penrose-difftopology}).
	
	\begin{definition}
		A spacetime $M$ is {\em globally hyperbolic}, if and only if $M$ is strongly causal and every set $J^+(x) \cap J^-(y)$ is compact, for some $x,y \in M$.
	\end{definition}
	In \cite{B03} it was proven that any globally hyperbolic spacetime admits a  \emph{smooth} foliation into Cauchy surfaces \cite[Theorem 1.1]{B03}. Moreover, the induced metric of such a globally hyperbolic spacetime admits a specific form \cite[Theorem 1.1]{B05}.
	
	\begin{theorem}\label{T1}
		Let $(M,{g})$ be an $(n+1)$-dimensional globally hyperbolic spacetime. Then  it is isometric to
		the smooth product manifold $\mathbb{R}\times \Sigma$ with a metric $g$, i.e.,
		\begin{equation} 
			g=-N^2 dt^2+h_{ij}dx^idx^j,
			\label{m}\end{equation}
		where $\Sigma$ is a smooth $n$-manifold, $t:\mathbb{R}\times  \Sigma\mapsto\mathbb{R}$   is the natural
		projection, $N: \mathbb{R}\times  \Sigma \mapsto (0,\infty)$ a smooth function, and $\mathbf{h}$ a $2$-covariant symmetric tensor field on $\mathbb{R}\times  \Sigma$, satisfying the following condition:  Each hypersurface  $\Sigma_t\subset M$ at constant $t$ is a Cauchy surface, and the restriction $\mathbf{h}(t)$ of $\mathbf{h}$ to such a  $\Sigma_t$ is a Riemannian metric (i.e.\  $\Sigma_t$ is spacelike).
	\end{theorem}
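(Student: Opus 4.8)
The plan is to upgrade the smooth Cauchy foliation of \cite[Theorem~1.1]{B03} to an isometric product splitting by calibrating and flowing along the gradient of a Cauchy time function. First I would invoke \cite[Theorem~1.1]{B03} to fix a smooth surjective Cauchy time function $\tau:M\to\R$ whose level sets $\Sigma_s:=\tau^{-1}(s)$ are spacelike Cauchy surfaces, and set $\Sigma:=\Sigma_0$. Since a hypersurface is spacelike precisely when its normal is timelike, the gradient $\nabla\tau$ is everywhere timelike, so $g(\nabla\tau,\nabla\tau)<0$ and the vector field
\[ X:=\frac{\nabla\tau}{g(\nabla\tau,\nabla\tau)} \]
is smooth, timelike, and normalised so that $X(\tau)=g(\nabla\tau,X)=1$; thus $\tau$ grows at unit rate along the integral curves of $X$, i.e.\ $\tau(\gamma(s))=\tau(\gamma(0))+s$.

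Next I would build the splitting map $\Phi:\R\times\Sigma\to M$ sending $(s,p)$ to the point reached by flowing from $p\in\Sigma$ along $X$ for parameter time $s$. The crucial point is completeness of this flow. As $X$ is smooth on all of $M$, the escape lemma forces any maximal integral curve defined on a bounded interval to leave every compact set, hence to be inextendible as a causal curve; but an inextendible causal curve meets each Cauchy surface $\Sigma_s$, so $\tau$ attains every real value along it. Because $\tau$ increases at unit rate, the flow parameter must then range over all of $\R$, and the flow is complete. Since $\tau(\Phi(s,p))=s$, the map $\Phi$ carries $\{s\}\times\Sigma$ onto $\Sigma_s$, and since each integral curve crosses $\Sigma_0$ exactly once (Cauchy property) it is a bijection; smoothness of $\Phi$ and of its inverse, given by $q\mapsto(\tau(q),p(q))$ with $p(q)$ the unique intersection of the flow line through $q$ with $\Sigma_0$, follows from smooth dependence on initial data, so $\Phi$ is a diffeomorphism.

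It then remains to read off the metric in the adapted coordinates $(t,x^i):=(s,x^i)$, where the $x^i$ are local coordinates on $\Sigma$ transported by the flow, so that $\partial_t=X$. Because $X$ is proportional to $\nabla\tau$ it is orthogonal to every slice $\Sigma_t$, whereas the $\partial_i$ are tangent to $\Sigma_t$; this annihilates the cross terms, $g(\partial_t,\partial_i)=0$. The surviving components are
\[ g(\partial_t,\partial_t)=g(X,X)=\frac{1}{g(\nabla\tau,\nabla\tau)}<0, \]
so that $N^2:=-\,g(\partial_t,\partial_t)$ is a smooth positive function, while $h_{ij}:=g(\partial_i,\partial_j)$ restricts for each fixed $t$ to the induced metric on the spacelike Cauchy surface $\Sigma_t$ and is therefore Riemannian. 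Pulling $g$ back through $\Phi$ thus produces the asserted form \eqref{m} with $t$ the natural projection.

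The main obstacle is concentrated in the completeness of the gradient flow and the consequent bijectivity of $\Phi$, which is exactly where global hyperbolicity is used, via the property that every inextendible causal curve meets each Cauchy slice once and only once. The genuinely deep ingredient -- the existence of a \emph{smooth} Cauchy time function with spacelike (hence timelike-normal) level sets, rather than a merely continuous Geroch-type time function -- I take for granted from \cite[Theorem~1.1]{B03}; granting it, the remainder is the calibration $X=\nabla\tau/g(\nabla\tau,\nabla\tau)$ of the flow together with the orthogonal decomposition of $g$ relative to the foliation.
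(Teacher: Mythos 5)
There is a genuine gap, and it sits exactly where you locate the ``deep ingredient.'' The paper does not prove this theorem at all: it is quoted verbatim from \cite[Theorem~1.1]{B05}, so the real question is whether your argument constitutes an independent proof. It does not, because your starting point --- a smooth surjective function $\tau$ with everywhere timelike gradient all of whose level sets are spacelike Cauchy surfaces (a \emph{Cauchy temporal function}) --- is not what \cite[Theorem~1.1]{B03} provides. That reference establishes only the existence of a single smooth spacelike Cauchy hypersurface (closing the smoothness gap in Geroch's theorem); upgrading this to a global temporal function whose every level set is a spacelike Cauchy surface is precisely the hard content of \cite[Theorem~1.1]{B05}, i.e.\ of the theorem you are asked to prove. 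So the proposal is circular: once a Cauchy temporal function is granted, the metric splitting \eqref{m} follows by essentially the routine computation you give, and indeed this is how Bernal--S\'anchez themselves pass from the temporal function to the orthogonal form of the metric. The work of the theorem is the existence statement you import, not the reduction you carry out.

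That said, the reduction itself is sound and worth crediting: the completeness of the flow of $X=\nabla\tau/g(\nabla\tau,\nabla\tau)$ via the escape lemma plus the fact that an inextendible causal curve meets every Cauchy slice is correct, and it is the right place where global hyperbolicity enters; the orthogonality $g(\partial_t,\partial_i)=0$ from $X\parallel\nabla\tau$, the identification $N^2=-g(X,X)>0$, and the Riemannian character of $h(t)$ on the spacelike slices are all fine. To make this a genuine proof you would need to supply (or correctly cite) the construction of the Cauchy temporal function itself --- e.g.\ the interpolation/convolution arguments of \cite{B05} that smooth Geroch's continuous time function while keeping the gradient timelike and all level sets Cauchy --- rather than attributing it to \cite{B03}.
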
 
	
	Hounnonkpe and Minguzzi proved that the (strong-)causality condition is unnecessary in the definition of global hyperbolicity, for a reasonable (meaning non-compact) spacetime of dimension strictly greater than $2$ (see \cite{Minguzzi2}).  
	
	\begin{theorem}[Hounnonkpe-Minguzzi]
		A non-compact spacetime of dimension strictly greater than $2$ is globally hyperbolic, if and only if the causal diamonds are compact in the  topology, $T_A$.
	\end{theorem}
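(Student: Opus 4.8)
The plan is to treat the two implications separately and concentrate all the effort on the converse. The forward implication is immediate from the classical definition recalled above: if $M$ is globally hyperbolic it is strongly causal and all sets $J^+(x)\cap J^-(y)$ are compact, so in particular the causal diamonds are compact in $T_A$. Hence the whole content of the theorem lies in showing that, under the two standing hypotheses ($M$ non-compact and $\dim M>2$), compactness of every diamond $J^+(p)\cap J^-(q)$ by itself already yields global hyperbolicity; in other words, that the causality condition becomes redundant. Both hypotheses are clearly necessary: a compact spacetime always contains a closed timelike curve and so is never globally hyperbolic, while the $2$-dimensional Lorentzian cylinder has compact diamonds without being causal.

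For the converse I would assume compactness of all diamonds and aim to deduce the causality condition (no closed causal curves) for free; once that is in hand, the standard result of Bernal--S\'anchez and Minguzzi --- that a causal spacetime with compact causal diamonds is globally hyperbolic --- finishes the argument and simultaneously supplies the Cauchy foliation of Theorem \ref{T1}. So suppose, for contradiction, that $M$ is not causal. Then there is a point $p$ lying on a closed causal curve, and every such curve through $p$ is confined to the compact diamond $J^+(p)\cap J^-(p)$, since each of its points $q$ satisfies $p\prec q\prec p$; thus the causality-violating region is trapped inside a compact set. The first step is therefore to record that compactness of diamonds forces every connected causality-violating region to have compact closure.

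The decisive step --- where both hypotheses enter --- is to show that such a confined, non-empty causality-violating region cannot coexist with non-compactness of $M$ when $\dim M>2$. Here I would use that the set of future-pointing null directions at a point is the sphere $S^{\dim M-2}$, which is connected exactly when $\dim M>2$ (for $\dim M=2$ it degenerates to the two points of $S^0$, which is precisely why the cylinder is a counterexample). Connectedness of this direction sphere should let me propagate the violation: continuously rotating the causal and null geodesics emanating from the closed causal curve, one argues that the violating region cannot remain an isolated compact pocket but must spread through $M$; combined with the compactness of the individual diamonds this forces the violating region to exhaust $M$, making $M$ compact and contradicting non-compactness. Therefore no closed causal curve exists, $M$ is causal, and the cited characterization upgrades this to global hyperbolicity.

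I expect the main obstacle to be exactly this propagation step: converting the heuristic that ``a connected null cone spreads the causality violation'' into a rigorous argument. This calls for careful limit-curve and accumulation arguments for inextendible causal curves, and for a topological bookkeeping of how the compactness of individual diamonds assembles into the global statement that $M$ is covered. The dimensional hypothesis enters only through the connectedness of the direction sphere, so that weakening either ``$\dim M>2$'' or ``non-compact'' is expected to break the argument --- consistent with the two-dimensional and compact counterexamples noted above.
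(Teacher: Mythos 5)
First, note that the paper does not prove this theorem at all: it is quoted as a known result of Hounnonkpe and Minguzzi from \cite{Minguzzi2}, so your proposal can only be judged against that source. Your skeleton is the correct one and does match theirs: the forward direction is immediate from the definition; for the converse one tries to extract the causality condition from compactness of the diamonds together with non-compactness and $\dim M>2$, and then invokes the Bernal--S\'anchez/Minguzzi upgrade ``causal $+$ compact causal diamonds $\Rightarrow$ globally hyperbolic.'' You also correctly observe that a causality-violating class through $p$ sits inside the compact set $J^+(p)\cap J^-(p)$, and that non-compactness rules out total viciousness (if $M=J^+(p)\cap J^-(p)$ it would be compact). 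Up to that point the proposal is sound.

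The genuine gap is the ``decisive step,'' and it is not a small one: it is the entire content of the theorem. The mechanism you propose --- connectedness of the sphere $S^{\dim M-2}$ of null directions lets one ``rotate'' null geodesics so that the causality violation ``spreads'' until it exhausts $M$ --- is not a valid argument and cannot be repaired in that form. Causality violation does not propagate along null geodesics: a point reached from a closed causal curve by a null geodesic need not itself lie on any closed causal curve, and chronology-violating classes $I^+(p)\cap I^-(p)$ are perfectly capable of being proper open subsets with compact closure in spacetimes of any dimension; nothing about a connected null cone forces such a ``pocket'' to grow. What the actual proof does instead is exploit the fine structure of the boundary of a chronology-violating class: that boundary is achronal and ruled by inextendible lightlike geodesics, which, being trapped in a compact set, are imprisoned; compactness of the causal diamonds is then used (via limit-curve arguments and the closedness/reflectivity properties it forces on $J$) to derive a contradiction, and the dimension hypothesis enters in excluding the residual possibility of closed achronal null geodesics --- this is exactly where the two-dimensional cylinder counterexample with a single closed null geodesic and tilting light cones lives, not, as you suggest, the flat Lorentzian cylinder (whose diamonds are either non-compact or which is outright globally hyperbolic, depending on which circle you make timelike). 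So while your decomposition and your use of the non-compactness hypothesis are right, the step you yourself flag as the main obstacle is precisely the theorem, and the heuristic offered for it would need to be replaced by the imprisonment/achronal-boundary machinery rather than formalized as stated.
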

	
	Recently, there has been a generous step towards  an understanding of the  nature of globally hyperbolic spacetimes down to the most fundamental level. In \cite{Domains}, it has been shown that the structure of such spacetimes is equivalent to the structure of interval domains, which are purely order-theoretic objects. Let us have a closer look at the main results.

	\begin{definition} 
		A set $B$ equipped with a transitive relation $\ll$ is an {\em abstract basis}, if $\ll$ is $-$-interpolative, that is for all $S \in \mathcal{P}(B)$, such that $S$ is finite, if $S \ll x$, then there exists  $y\in B$, such that $S \ll y \ll x$, where by $S \ll x$ one means that $y \ll x$, for all $y \in S$.
	\end{definition}
	
	By $int(C)$ one denotes the set $\{(a,b): a \ll b\}$, where 
	$(a,b) \ll (c,d)$ if and only if $a \ll c$ and $d \ll b$. It is easy to see that, if $(B,\ll)$ is an abstract basis which is $-$- and $+$-interpolative (where $+$-interpolation is the dual to $-$-interpolation), then the set $(int(C),\ll)$ is an abstract basis.
	
	\begin{definition}
		An {\em ideal} of an abstract basis $(B,\ll)$ is a nonempty set $I \in \mathcal{P}(B)$, which is lower and directed. 	The set $(\overline{B},\subset)$ of all ideals of the abstract basis $(B,\ll)$ is a poset and is called the {\em ideal completion} of $B$.
	\end{definition}

	Before introducing the main theorems of \cite{Domains}, we remind that since spacetimes are considered naturally as being second countable  and second countability implies separability, a spacetime admits a countable dense subset. In the theorem that follows a countable set, which is equipped with a causal relation, determines the entire space.
	
	\begin{theorem}[Martin-Panangaden]\label{Martin} If $C$ is a countable dense subset of a globally hyperbolic spacetime $M$, where $\ll$ denotes chronology, then $\max IC \simeq M$, where the maximal elements are equipped with the Scott topology.
	\end{theorem}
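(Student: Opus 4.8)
The plan is to realise the points of $M$ as the maximal ideals in the ideal completion of the abstract basis of $C$-intervals, and then to match the Scott topology on these maximal ideals with the manifold topology, using that for strongly causal (hence globally hyperbolic) spacetimes the Alexandrov topology $T_A$ coincides with the manifold topology. The first step is to verify that $(int(C),\ll)$ really is an abstract basis. Transitivity of $\ll$ on intervals is inherited from transitivity of chronology. For $-$-interpolativity, let a finite family $S\ll(c,d)$, so that each $(a,b)\in S$ satisfies $a\ll c$ and $d\ll b$. Because the chronology relation is open, the set $\bigcap_{(a,b)\in S} I^+(a)\cap I^-(c)$ is open and nonempty (it contains points slightly to the past of $c$), and similarly $\bigcap_{(a,b)\in S} I^+(d)\cap I^-(b)$ is open and nonempty; density of $C$ then yields $a',b'\in C$ in these sets, giving $(a',b')\in int(C)$ with $S\ll(a',b')\ll(c,d)$. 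Forming $IC:=\overline{int(C)}$ produces a continuous dcpo, since the ideal completion of any abstract basis is a domain whose way-below relation is governed by $\ll$.

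Next I would set up the bijection with $M$. To $p\in M$ assign the set of $C$-intervals that trap it,
\[
I_p:=\{(a,b)\in int(C):a\ll p\ll b\}.
\]
It is nonempty because $C$ is dense in the open sets $I^-(p)$ and $I^+(p)$; it is a lower set by transitivity of chronology; and it is directed, since given two trapping intervals, openness of chronology produces nonempty open sets just to the past and future of $p$ in which density of $C$ selects $a'$ with $a_i\ll a'\ll p$ and $b'$ with $p\ll b'\ll b_i$, yielding a smaller trapping interval above both. Hence $I_p\in IC$. The heart of the matter is that $p\mapsto I_p$ is a \emph{bijection onto} $\max IC$. Injectivity follows from strong causality: distinct points are separated by some Alexandrov diamond $I^+(a)\cap I^-(b)$ with $a,b\in C$, so the associated ideals differ. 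For maximality and surjectivity I would pass to the geometry: given a maximal ideal $J$, the family of closed causal diamonds $\{\overline{I^+(a)\cap I^-(b)}:(a,b)\in J\}$ is directed by reverse inclusion and consists of nonempty \emph{compact} sets, the compactness being exactly what global hyperbolicity guarantees. The finite intersection property then makes the total intersection nonempty, strong causality forces it to be a single point $p$, and one checks $J=I_p$, so the maximal ideals are precisely the $I_p$.

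Finally I would compare the topologies. The Scott topology on $IC$ has a basis given by the sets $\Uparrow\!\big(\!\downarrow\!(a,b)\big)=\{J\in IC:(a,b)\in J\}$ attached to the basis elements $(a,b)\in int(C)$. Intersecting with $\max IC$ and pulling back along $p\mapsto I_p$ returns exactly the Alexandrov sets $I^+(a)\cap I^-(b)$ with $a,b\in C$, since $(a,b)\in I_p\iff a\ll p\ll b$. As $C$ is dense and $M$ is strongly causal, these sets form a basis for the manifold topology, so $p\mapsto I_p$ is a homeomorphism and $\max IC\simeq M$.

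I expect the \textbf{main obstacle} to be the surjectivity-and-maximality step, namely proving that every maximal ideal has the form $I_p$. This is where the purely order-theoretic directedness of an ideal must be converted into convergence to a single spacetime point, and it relies essentially on the two geometric inputs, compactness of causal diamonds from global hyperbolicity and point-separation from strong causality. By contrast, once the Alexandrov topology is known to agree with the manifold topology, the final topological identification is routine.
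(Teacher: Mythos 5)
Your proposal is correct in outline, but it takes a different route from the one the paper sketches. The paper's argument is almost entirely order-theoretic: it observes that a globally hyperbolic spacetime is bicontinuous as a poset and that the domain-theoretic way-below relation coincides with chronology (citing Proposition~4.4 of Martin--Panangaden's \emph{Spacetime topology from causality}), so that $(C,\ll)$ and hence $(int(C),\ll)$ are abstract bases \emph{for the interval domain $IM$}; the isomorphism $\max IC\simeq M$ then drops out of the general fact that the ideal completion of any basis of a domain is isomorphic to that domain. You instead rebuild the isomorphism by hand: explicit ideals $I_p$, injectivity from strong causality, surjectivity onto the maximal elements from compactness of causal diamonds via the finite intersection property, and the topological identification from $T_A=T_M$ together with density of $C$. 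These are genuinely the same theorem reached from opposite ends: the paper's route is shorter, hides the geometry inside the cited proposition on way-below versus chronology, and transfers verbatim to abstract globally hyperbolic posets (which is exactly how the paper obtains its Theorem~3.7); your route exposes precisely where each causality hypothesis is used, which is the content buried in those citations, and you correctly identify the surjectivity-and-maximality step as the place where order theory must be converted into geometry. Two small points to tighten: after extracting a point $p$ from the nested compact closed diamonds of a maximal ideal $J$, the clean way to get $J\subseteq I_p$ is to interpolate inside $J$ (pick $(a',b')\in J$ with $(a,b)\ll(a',b')$, so that $a\ll a'\preceq p\preceq b'\ll b$ upgrades the causal relations to chronological ones) and then invoke maximality of $J$, rather than arguing that the intersection is a singleton; and you should also record the converse direction, namely that each $I_p$ is itself maximal, so that the map really is a bijection onto $\max IC$.
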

	
	Theorem \ref{Martin} can be written more generally as follows:
	
	\begin{theorem}\label{Martinrev} If $C$ is a countable dense subset of a globally hyperbolic set $M$, then $\max IC \simeq M$.
	\end{theorem}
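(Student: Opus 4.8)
The plan is to verify that the proof of the Martin--Panangaden theorem (Theorem~\ref{Martin}) is entirely order-theoretic, so that nothing in it uses the differentiable or Lorentzian structure of a spacetime beyond the order-theoretic consequences of global hyperbolicity recorded in Definition~\ref{D1}(8). Since a globally hyperbolic spacetime, equipped with its chronology $\ll$, is precisely a globally hyperbolic set in the sense of that definition (it is bicontinuous, and its intervals $[a,b]$ are compact in the interval topology, which for such a spacetime coincides with the Alexandrov and the manifold topologies), the content of Theorem~\ref{Martinrev} is exactly that these abstract hypotheses are all the reconstruction actually needs. Accordingly, I would fix a globally hyperbolic set $M$, endow it with its interval topology, and let $C\subseteq M$ be the given countable dense subset carrying the inherited relation $\ll$.

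First I would check that $(C,\ll)$ is an abstract basis. Transitivity is inherited from $M$. For $-$-interpolation, given a finite $S\ll x$ with $S\cup\{x\}\subseteq C$, bicontinuity of $M$ supplies an interpolant $z\in M$ with $S\ll z\ll x$; the set $\{w: S\ll w\ll x\}$ is a nonempty open set in the interval topology, so density of $C$ yields a point $c\in C$ inside it, which is the required interpolant in $C$. The dual argument, using the filtered half of bicontinuity, shows that $(C,\ll)$ is also $+$-interpolative, so by the observation recorded after the definition of an abstract basis the interval basis $int(C)=\{(a,b):a,b\in C,\ a\ll b\}$, ordered by $(a,b)\ll(c,d)$ iff $a\ll c$ and $d\ll b$, is again an abstract basis. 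Its ideal completion, which we denote $IC$, is then a continuous dcpo by general domain theory.

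Next I would define the candidate homeomorphism $\phi:M\to\max IC$ by $\phi(x)=\{(a,b)\in int(C): a\ll x\ll b\}$ and establish its properties in turn. That $\phi(x)$ is a lower set is immediate from the order on $int(C)$; directedness follows from the directedness of the past and the filteredness of the future of $x$ (both part of bicontinuity) together with density of $C$, which let one sandwich any two intervals around $x$ inside a third interval around $x$ with endpoints in $C$. Injectivity reduces to the Hausdorff property of the interval topology, which holds on a globally hyperbolic set: if $x\neq y$ then a basic interval separates them, so the associated ideals differ. The two genuinely substantial points are that each $\phi(x)$ is maximal and that every maximal ideal arises this way.

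The main obstacle is surjectivity onto $\max IC$, and it is precisely here that global hyperbolicity---as opposed to mere bicontinuity---is used. Given a maximal ideal $I\in\max IC$, its members determine a filtered family of compact intervals $\{[a,b]:(a,b)\in I\}$ in $M$ with the finite intersection property; compactness of these intervals (Definition~\ref{D1}(8)) forces $\bigcap_{(a,b)\in I}[a,b]$ to be nonempty, and interpolation shrinks this intersection to a single point $x$, whence $I=\phi(x)$ and, in particular, each $\phi(x)$ is maximal. This is the order-theoretic shadow of the classical fact that a maximal nested family of causal diamonds $J^+(a)\cap J^-(b)$ in a globally hyperbolic spacetime has a unique limit point; transcribing it carefully, and in particular arguing that the intervals shrink in the interval topology rather than merely nesting, is the step I expect to be the most delicate. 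Finally, to upgrade the bijection to a homeomorphism I would compute that the trace on $\max IC$ of a basic Scott-open set $\Uparrow(a,b)$ is carried by $\phi^{-1}$ onto the interval $\{x:a\ll x\ll b\}$; since these intervals form a basis for the interval topology on $M$, this matches the two topologies and completes the proof.
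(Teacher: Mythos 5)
Your proposal is correct in substance, but it takes a noticeably more self-contained route than the paper. The paper's own proof is a two-line reduction: bicontinuity makes $(C,\ll)$ an abstract basis, hence $(int(C),\ll)$ an abstract basis for the interval domain $IM$; since the ideal completion of any basis of a domain is isomorphic to that domain, $IC\simeq IM$, and the conclusion then follows from the (already established, cited) fact that $\max IM\simeq M$ for a globally hyperbolic poset. You instead rebuild the homeomorphism $M\to\max IC$ from scratch: verifying the abstract-basis property via interpolation plus density, defining $\phi(x)=\{(a,b)\in int(C):a\ll x\ll b\}$, and proving injectivity, maximality, surjectivity and bicontinuity of $\phi$ directly. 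What your approach buys is transparency about where each hypothesis enters --- bicontinuity for interpolation and injectivity, compactness of the intervals $[a,b]$ for surjectivity onto $\max IC$ --- which the paper's citation-based argument hides inside the Martin--Panangaden machinery; what the paper's approach buys is brevity and the reuse of $IM$ as an intermediary, avoiding the delicate shrinking argument you correctly flag as the hardest step. Two small points you should make explicit: density of $C$ must be understood with respect to the interval topology (the only topology an abstract globally hyperbolic set carries), and in the surjectivity step you need the standard facts that $u\ll v$ implies $u\preceq v$ and that $u\ll v\preceq w$ implies $u\ll w$ in a continuous poset, in order to pass from $x\in\bigcap[a',b']$ to $a\ll x\ll b$ for all $(a,b)\in I$. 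Neither is a gap, but both deserve a sentence.
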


	The proof of Theorem \ref{Martinrev} is the same as that of Theorem \ref{Martin}. Since $M$ is bicontinuous by definition, $(C,\ll)$ will be an abstract basis and so $(int(C),\ll)$ will be an abstract basis for $IM$. Since ideal completions for bases of $IM$ are isomorphic to $IM$, the result follows immediately. The result of Theorem \ref{Martin} follows from the fact that the relation $\ll$ of Definition \ref{D1} (5), which reads $x \ll y$ as ``$x$ approximates $y$'', coincides with the chronological partial order $\ll$ in a globally hyperbolic spacetime (see \cite{Causality}, Proposition 4.4).

	\begin{theorem}[Martin-Panangaden]\label{Martinfinal}
		The category of globally hyperbolic posets is equivalent to the category of interval domains.
	\end{theorem}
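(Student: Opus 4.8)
The plan is to exhibit the equivalence by constructing a pair of functors between the two categories together with natural isomorphisms witnessing that their composites are the respective identity functors. Write $\mathbf{GH}$ for the category of globally hyperbolic posets and $\mathbf{IN}$ for the category of interval domains. The object assignments are essentially already available from the preceding development: to a globally hyperbolic poset $M$ one associates the interval domain $IM$, obtained as the ideal completion of the abstract basis $(int(M),\ll)$ of intervals $(a,b)$ with $a \ll b$ (recall that, $M$ being bicontinuous, $(M,\ll)$ is an abstract basis), while to an interval domain $D$ one associates the poset $\max D$ of its maximal elements, equipped with the Scott topology as in Theorem \ref{Martin}. First I would check that these assignments extend to functors: a structure-preserving map $f \colon M \to N$ induces $If \colon IM \to IN$ acting on endpoints by $(a,b) \mapsto (f(a),f(b))$, and conversely a Scott-continuous map of interval domains restricts to the maximal elements. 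Verifying that these respect identities and composition, and that $If$ is Scott-continuous, is routine.

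Next I would produce the first natural isomorphism $\eta \colon \mathrm{id}_{\mathbf{GH}} \Rightarrow \max \circ I$. At the level of objects this is precisely the content of Theorem \ref{Martinrev}: taking $C=M$ (dense in itself) yields $\max IM \simeq M$, with the isomorphism sending $x \in M$ to the degenerate interval $[x,x]$. The essential point, already noted after Theorem \ref{Martinrev}, is that the approximation relation $\ll$ of Definition \ref{D1}(5) coincides with chronology on $M$, so that the Scott topology on $\max IM$ matches the interval (Alexandrov) topology on $M$. The remaining work is to confirm that this family of isomorphisms is natural in $M$, which reduces to the commutativity of the evident naturality square and follows directly from the definition of $If$ on endpoints.

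The harder direction is the second natural isomorphism $\epsilon \colon I \circ \max \Rightarrow \mathrm{id}_{\mathbf{IN}}$, requiring $I(\max D) \cong D$ for every interval domain $D$. Here one must invoke the full axiomatic content of an interval domain: each element of $D$ is to be recovered as the interval $[\,\ell, r\,]$ determined by a pair $\ell, r$ of maximal elements via the left and right endpoint maps, and these endpoint maps must exhibit $\max D$ as a bicontinuous poset whose intervals have compact closure in the interval topology, that is, as a genuine object of $\mathbf{GH}$. I expect this to be the main obstacle. One has to show, purely from the domain axioms, first that $\max D$ really is globally hyperbolic (bicontinuity together with compactness of $\overline{(a,b)}$), and then that the reconstruction map $D \to I(\max D)$, sending an element of $D$ to the interval spanned by its endpoints, is an order isomorphism of domains. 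Establishing bicontinuity and the compactness condition is the delicate step, since this is exactly where the order-theoretic hypotheses on $D$ must be translated into the causal and topological hypotheses defining global hyperbolicity; the endpoint maps provide the bridge, but one must ensure they are continuous and jointly monic in the required sense. Once both composites are identified with the identity functors via these natural isomorphisms, the equivalence of categories follows.
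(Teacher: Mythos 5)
First, a point of reference: the paper does not prove Theorem \ref{Martinfinal} at all --- it states explicitly that the proof ``depends on several technical results'' in \cite{Domains} and leaves it there. So your proposal can only be measured against the argument in Martin--Panangaden, and your overall strategy (functors $I$ and $\max$ with natural isomorphisms $\eta$ and $\epsilon$) is indeed the strategy used there. The unit direction, $\max IM \simeq M$ via $x \mapsto [x,x]$ using Theorem \ref{Martinrev} and the identification of the approximation relation of Definition \ref{D1}(5) with chronology, is correctly set up.

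The genuine gap is that the entire mathematical content of the theorem sits in the step you defer: showing that for an abstract interval domain $D$ the set $\max D$ is a globally hyperbolic poset and that $D \cong I(\max D)$. You cannot even begin this verification from the material in the paper, because the paper never defines ``interval domain'' --- Definition \ref{D1}(9) only gives an example (compact intervals of $\R$). In \cite{Domains} an interval domain is a continuous dcpo equipped with \emph{left and right endpoint projections} subject to roughly half a dozen axioms, chosen precisely so that bicontinuity of $\max D$ and compactness of the closed intervals can be derived; writing down those axioms and carrying out that derivation is most of the work of the original paper, and your proposal replaces it with the statement that it is ``the delicate step.'' A second, smaller gap: an equivalence of categories depends on the choice of morphisms, and you leave these unspecified (``structure-preserving map,'' ``routine''). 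In \cite{Domains} the morphisms on the interval-domain side are Scott-continuous maps commuting with the endpoint projections, and on the globally hyperbolic side maps preserving the relevant order-theoretic structure; without fixing these, the functoriality of $I$ and $\max$ and the naturality of $\eta$ and $\epsilon$ cannot be checked. As it stands the proposal is an accurate road map of the known proof rather than a proof.
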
 
	
	The proof of Theorem \ref{Martinfinal} depends on several technical results that precede it in \cite{Domains}, but the result on its own is powerful; within a globally hyperbolic spacetime, one can convert questions of a physical significance, to questions in domain theory (and vice versa).  
	\\
	
	{\bf{ Question 1:}} Consider a globally hyperbolic poset $(A,\prec)$. Does the compactness of the intervals $[a,b]$, under the interval topology,  imply the bicontinuity of $(A,\prec)$, in a similar fashion that the compactness in the closed diamonds in a spacetime $M$, under the Alexandrov topology $T_A$, implies the (strong) causality of $M$ (for dimensions strictly greater than $2$)? This is a reasonable question, given the results in \cite{Minguzzi2} as well as the main result of \cite{Domains}, which guarantee  the equivalence (up to category theory) of interval domains and globally hyperbolic spacetimes. The only problem, towards \cite{Minguzzi2}, of considering a globally hyperbolic poset, is that we are  left only with an order-theoretic structure; what would be, if there is such, the equivalent condition in \cite{Minguzzi2} of ``dimension strictly greater than $2$''? Is there a possibility that the condition on the dimension applies only to globally hyperbolic spacetimes, while the proposed conjecture holds in globally hyperbolic posets without any restriction?

	In the section that follows, we consider a spacetime, and we discuss how topology can affect the way that we look at it.
	
	\section{Different Candidates for a Topology: Which is the Most ``Fruitful'' Choice?}
	
	When one considers a spacetime manifold $(M,g)$, the spacetime topology is traditionally  taken to be the manifold topology $T_M$, a topology which does not incorporate the causal-structure of $M$, a structure which is linked to the Lorentzian metric $g$. This problem was first addressed by Zeeman in his papers \cite{Zeeman2} and \cite{Zeeman1}, where he restricted the discussion to Minkowski space. It was then extended to curved spacetimes by G\"obel in \cite{gobel} and by Hawking-King-McCarthy in \cite{Hawking-Topology}, and the discussion was taken into a further level by several authors, including an important feedback by Low in \cite{Low_path} (for a recent survey on the topologization of spacetime, see \cite{KBP_top}). In particular, Low proved that the Limit Curve Theorem (LCT) does not hold under the Path topology of Hawking-King-McCarthy, and claimed that this fact gives the right for the manifold topology to be called a fruitful one, against any other known candidates. Given that the LCT plays an important role in building contradictions in proofs in singularity theorems  in general relativity (see, for example, \cite{Galloway1} and \cite{Clarke}), the authors of \cite{KBP_sing} posed the question   whether the singularity problem is a purely topological one, depending largely on the topology that one chooses to equip the spacetime with. For example, if one considers the space of timelike paths, the notion of convergence stays unaffected, if choosing either the Path topology $\mathcal{P}$ or the manifold topology $T_M$, while convergence in the space of causal paths under $T_M$ does not imply convergence in the space of causal paths under $\mathcal{P}$ (see \cite{Low_path}). Consider the causal cone  of an event $x$  in a spacetime $M$, that is, consider the union of the time-cone $C^T(x)$ with the light-cone $C^L(x)$. Consider now an open ball $B_\epsilon^d (x)$  of radius $\epsilon$ w.r.t.\ the manifold topology $T_M$ (where the distance is defined via an induced Riemann metric $d$). Consider the intersection $A = (C^T(x) \cup C^L(x)) \cap B_\epsilon^d (x)$; the sets $A$ form a local basis for a topology $Z^{LT}$ on $M$, where   convergence under $Z^{LT}$ and under $T_M$ stays unaffected in the space of causal paths (see \cite{KBP_sing}). Obviously, this statement cannot hold if $Z^{LT}$ is considered in the space of timelike paths.

	It is clear that the path topology $\mathcal{P}$ in \cite{Hawking-Topology} has several advantages over the manifold topology on a spacetime, $\mathcal{P}$ incorporates the causal, differential and smooth conformal structure of spacetime and, most importantly, the group of homeomorphisms of $\mathcal{P}$ is the conformal group. According to G\"obel (see \cite{gobel}), the reason for considering the Euclidean topology as a ``natural'' topology for the Minkowski spacetime (and, more generally, the manifold topology for a spacetime manifold) was that people were mostly concerned with Riemannian spaces and not with spaces equipped with a Lorentzian metric; it really seems that the blind use of the manifold topology, while proving theorems in spacetime geometry, was due to ignorance (in the words of G\"obel)!
	
	
	In the setting of globally hyperbolic spacetimes, Low introduced a list of interesting topological results, including the following two  (see \cite{Low}).
	
	\begin{proposition} \label{L1}
		A strongly causal spacetime $M$ is globally hyperbolic  if and only if the space of smooth inextendible causal curves $\mathcal{C}$ is Hausdorff.
	\end{proposition}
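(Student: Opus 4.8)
The plan is to prove both implications by relating the topology on $\mathcal{C}$ to the convergence of causal curves, using the Limit Curve Theorem (LCT) as the central tool and the splitting of Theorem~\ref{T1} to control the globally hyperbolic case.

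For the forward direction I would assume $M$ globally hyperbolic and invoke Theorem~\ref{T1} to fix an isometry $M \cong \R \times \Sigma$ with $g = -N^2\,dt^2 + h_{ij}\,dx^i dx^j$ and each $\Sigma_t$ a spacelike Cauchy surface. Since $t$ is a smooth time function that is strictly increasing along every future-directed causal curve (a causal vector $v$ forces $N^2\dot t^2 \ge h(\dot x,\dot x)\ge 0$, hence $\dot t\neq 0$), and since an inextendible causal curve meets each acausal Cauchy surface $\Sigma_t$ exactly once, every $\gamma\in\mathcal{C}$ reparametrizes as a graph $t\mapsto (t,\sigma_\gamma(t))$ with $\sigma_\gamma\colon \R\to\Sigma$ obeying the Lipschitz causality bound imposed by the cones of $g$. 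First I would verify that $\gamma\mapsto\sigma_\gamma$ is a homeomorphism of $\mathcal{C}$ onto its image in the space of such sections with the topology of uniform convergence on compact $t$-intervals. Granting this, $\mathcal{C}$ embeds into a function space valued in the metrizable manifold $\Sigma$, which is Hausdorff, so $\mathcal{C}$ is Hausdorff. The point is that global hyperbolicity is exactly what makes each inextendible causal curve a \emph{global} graph over the $t$-axis and makes the curve topology agree with graph convergence.

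For the converse I would argue by contraposition: suppose $M$ is strongly causal but not globally hyperbolic, so by the definition of global hyperbolicity some causal diamond $J^+(p)\cap J^-(q)$ is non-compact. Then there is a sequence $x_n\in J^+(p)\cap J^-(q)$ with no convergent subsequence, and for each $n$ a future-directed causal curve $\gamma_n$ running from $p$ through $x_n$ to $q$. Applying the LCT in the strongly causal spacetime $M$ produces a limit causal curve, and the failure of compactness forces this limit to be inextendible and to leave every compact set, so that it does not actually join $p$ to $q$ (it ``runs into a hole''). The crux is then to extract from this data two \emph{distinct} inextendible causal curves $\lambda_1\neq\lambda_2$ together with a single sequence in $\mathcal{C}$ accumulating on both: the escaping points $x_n$ permit the approximating curves to be continued past the missing region in two inequivalent ways, so that one and the same sequence bifurcates onto two distinct inextendible limits, which is precisely a failure of the Hausdorff property of $\mathcal{C}$.

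I expect the main obstacle to be this last step of the converse, namely producing two genuinely distinct inextendible limit curves that share one approximating sequence, rather than merely a single incomplete limit. This demands a careful application of the LCT together with control of the curves near the non-compact region, and it is exactly here that the chosen topology on $\mathcal{C}$ must be handled with care: the non-Hausdorff phenomenon is the bifurcation of a net of causal curves in the limit, which the compactness of diamonds rules out in the globally hyperbolic case. A secondary technical point, needed for the forward direction, is checking that the graph identification is a homeomorphism and not merely a continuous bijection.
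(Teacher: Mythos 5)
First, note that the paper does not actually prove Proposition~\ref{L1}: it is quoted from Low \cite{Low}, and the only proof-relevant material in the text is the description of the topology $T^0$ on $\mathcal{C}$, generated by the sets $\mathcal{U}_U$ of all smooth inextendible causal curves meeting a given open $U\subset M$. This is the first gap in your proposal: the statement has no content until the topology on $\mathcal{C}$ is fixed, and you never commit to one. Both of your directions must be argued against $T^0$ specifically. Once you do so, your forward direction becomes simultaneously harder and unnecessary in the form you give it. Harder, because the delicate half of your claimed homeomorphism is continuity of $\gamma\mapsto\sigma_\gamma$ from the coarse topology $T^0$ to the topology of uniform convergence on compacta; establishing that requires exactly the compact-diamond estimates you are trying to avoid. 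Unnecessary, because for Hausdorffness a continuous injection into a Hausdorff space already suffices, and in fact there is a much more direct argument: two distinct inextendible causal curves meet some spacelike Cauchy surface $\Sigma_t$ of Theorem~\ref{T1} in distinct, causally unrelated points $p_1\neq p_2$; since $J^{\pm}$ are closed in a globally hyperbolic spacetime, $p_1$ and $p_2$ admit neighbourhoods $U_1, U_2$ such that no causal curve meets both, and then $\mathcal{U}_{U_1}$ and $\mathcal{U}_{U_2}$ are disjoint $T^0$-open sets separating the two curves. Your graph-embedding route can be made to work, but it front-loads the hardest technical step and you only flag it as a ``secondary'' point.

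In the converse you have correctly identified the standard strategy, but the step you yourself single out as the crux is precisely the proof, and it is left open. The standard resolution is this: with $J^+(p)\cap J^-(q)$ non-compact, extend the connecting curves $\gamma_n$ to inextendible causal curves and apply the limit curve theorem twice, once anchored at $p$ and once anchored at $q$, obtaining a future-inextendible limit curve $\lambda_1$ through $p$ that never reaches $q$ and a past-inextendible limit curve $\lambda_2$ through $q$ that never reaches $p$; non-compactness of the diamond is what prevents these from being segments of one and the same curve, so $\lambda_1\neq\lambda_2$ after inextendible extension. Non-Hausdorffness then follows because the single sequence $(\gamma_n)$ converges to both in $T^0$: a basic $T^0$-neighbourhood of $\lambda_i$ only requires meeting finitely many open sets that $\lambda_i$ meets, and the limit-curve property guarantees the $\gamma_n$ eventually meet all of them. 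Your phrase ``continued past the missing region in two inequivalent ways'' gestures at this but does not deliver the two curves or the shared approximating sequence. Two further technical points you should address: the proposition concerns \emph{smooth} causal curves while limit curves are a priori only locally Lipschitz, so a smoothing or closure argument is needed to stay inside $\mathcal{C}$; and strong causality must be invoked to ensure the limit curves are honest inextendible curves rather than imprisoned ones. As it stands the proposal is a correct outline of the classical argument with its decisive step missing.
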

	
	\begin{proposition} \label{L2}
		$M$ is globally hyperbolic  if and only if $\mathcal{C}$ is metrizable.
	\end{proposition}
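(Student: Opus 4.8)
The plan is to prove the two implications by rather different routes, using the Bernal--S\'anchez splitting of Theorem~\ref{T1} for the forward direction and the Hausdorff criterion of Proposition~\ref{L1} for the converse. The unifying theme is that metrizability sits between first countability and the separation properties of $\mathcal{C}$, so one direction is an explicit construction of a metric and the other is a separation argument feeding into Proposition~\ref{L1}.

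For the forward implication, suppose $M$ is globally hyperbolic. By Theorem~\ref{T1} we may write $M \cong \mathbb{R}\times\Sigma$ with a Cauchy temporal function $t$ whose level sets $\Sigma_t$ are smooth spacelike, hence acausal, Cauchy surfaces. Since $t$ strictly increases along every future-directed causal curve and each $\Sigma_t$ is an acausal Cauchy surface, every smooth inextendible causal curve meets each slice exactly once, with $t$ ranging over all of $\mathbb{R}$ along the curve. Parametrizing by $t$ therefore identifies each $\gamma \in \mathcal{C}$ with a map $s \mapsto \sigma(s)\in\Sigma$, i.e. with a point of $C(\mathbb{R},\Sigma)$, and I would verify that under this identification the $C^0$ (compact-open) topology on $\mathcal{C}$ agrees with the subspace topology inherited from $C(\mathbb{R},\Sigma)$. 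Since $\mathbb{R}$ is hemicompact and locally compact while $\Sigma$ is a metrizable manifold, the compact-open topology on $C(\mathbb{R},\Sigma)$ coincides with the topology of uniform convergence on an exhausting sequence of compacta and is therefore metrizable; as a subspace of a metrizable space, $\mathcal{C}$ is metrizable.

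For the converse, suppose $\mathcal{C}$ is metrizable. Then $\mathcal{C}$ is Hausdorff, and the remaining task is to upgrade this to global hyperbolicity without assuming strong causality as an extra hypothesis. The key step I would carry out is to show that Hausdorffness of $\mathcal{C}$ already forces $M$ to be strongly causal: if strong causality failed at some $p$, one could build a sequence of inextendible causal curves which, because of the almost-closed causal curves near $p$, admits two distinct accumulation curves in $\mathcal{C}$ via the limit curve theorem, contradicting uniqueness of limits in a Hausdorff (indeed metrizable, hence first countable) space. Once strong causality is established, Proposition~\ref{L1} applies directly and yields that $M$ is globally hyperbolic.

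The main obstacle is precisely this bridging step in the converse: Proposition~\ref{L1} carries a standing strong-causality assumption, whereas Proposition~\ref{L2} has no such hypothesis, so the whole weight of the argument rests on extracting strong causality from metrizability alone. Making the limit-curve construction rigorous --- exhibiting two genuinely distinct inextendible limit curves in $\mathcal{C}$ out of a strong-causality violation and verifying that they are limits of one common sequence --- is the delicate part, as is the topological check in the forward direction that the topology on $\mathcal{C}$ really is the compact-open topology transported along the $t$-parametrization.
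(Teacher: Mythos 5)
Your overall architecture (build a metric for the forward direction, reduce the converse to Proposition~\ref{L1}) is reasonable, and the forward half is at least in the spirit of what the paper reports: the paper does not reprove the statement but points to Low's construction of an explicit metric inducing the topology $T^0$ on $\mathcal{C}$. Still, there are two genuine gaps. First, in the forward direction you silently replace $T^0$ by the compact-open topology on (a subspace of) $C(\mathbb{R},\Sigma)$ under the $t$-parametrization coming from Theorem~\ref{T1}. The topology the paper puts on $\mathcal{C}$ is generated by the sets $\mathcal{U}$ of all curves meeting a fixed open $U\subset M$; this is a ``hitting-set'' (lower) topology on unparametrized curves, and it is not automatic that it coincides with uniform convergence on compacta of the $t$-parametrizations. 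That identification is exactly where global hyperbolicity (compactness of causal diamonds, limit-curve control) has to enter, and you flag it only as a check to be done; until it is done you have metrized a possibly finer topology, and metrizability does not pass to coarser topologies.

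Second, and more seriously, your converse rests entirely on the unproved claim that Hausdorffness (or metrizability) of $(\mathcal{C},T^0)$ forces strong causality of $M$. Proposition~\ref{L1} carries strong causality as a hypothesis, and in Low's treatment strong causality is a standing assumption (it is what makes the foliation of the bundle of causal directions well behaved so that $\mathcal{C}$ is a reasonable quotient); the statement of Proposition~\ref{L2} should be read with that standing hypothesis. Your proposed bridge --- that a strong-causality violation yields one sequence in $\mathcal{C}$ with two distinct limit curves --- is precisely the hard point and is not carried out: almost-closed causal curves near a point where strong causality fails need not produce two \emph{distinct} elements of $\mathcal{C}$ (as unparametrized curves the candidate limits may coincide), and the limit curve theorem you invoke itself presupposes causality/compactness control that is unavailable at this stage. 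As written, the converse is a restatement of the difficulty rather than a proof; the honest repair is to keep strong causality as a hypothesis, so that metrizable $\Rightarrow$ Hausdorff $\Rightarrow$ globally hyperbolic follows directly from Proposition~\ref{L1}.
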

	
	The topologization of the space of smooth inextendible causal curves $\mathcal{C}$ is important, because it affects the topology induced on $C$, the space of causal geodesics. $C$ need not be a manifold but, in particular, if the spacetime $M$ is strongly causal, then $C$ is a smooth manifold with boundary, with a smooth structure inherited from the homogeneous tangent bundle $UM$\footnote{The quotient of the tangent bundle minus the zero section, is called the homogeneous tangent bundle over $M$.} (see \cite{Low}). The canonical lift of a smooth causal curve from $M$ to $CM$ (where $CM$ is the bundle of causal directions)  and the lift from $C$ to $CM$ results in a foliation of $CM$. The space of leaves of this foliation is a topological space, with the quotient topology coming from $UM$. The topology of this space need not be Hausdorff even if $M$ is strongly causal. In the case that $C$, under this topology, is non-Hausdorff, then $M$ will be nakedly singular (a TIP lies inside a PIP; see \cite{Low} and for a more general exposition see \cite{Penrose-difftopology}).

	Low suggests a topology $T^0$  on $\mathcal{C}$  which can be described as follows. Given a smooth inextendible causal curve $\gamma$ in $\mathcal{C}$, let $\Gamma$ be the corresponding curve in $M$. Let $x \in \Gamma$ and  $U$ be an open neighborhood of $x$  in $M$, (w.r.t.\ the manifold topology $T_M$). Let $\mathcal{U} \subset \mathcal{C}$   be the set consisting of all (smooth inextendible  causal-) curves which pass through $U$. Let $T^0$   be the topology generated by $\mathcal{U}$. Then $T^0$ is    a  pointwise convergent topology. It follows,   that  if a spacetime $M$ has a closed timelike curve (CTC), then $(M,T^0)$ cannot satisfy the $T_1$-separation axiom and, if $M$ is totally vicious\footnote{For the definition and properties of (non-) totally vicious spacetimes, see \cite{Minguzzi1}.}, then there exists $x \in \mathcal{C}$ such that $x$ is dense in $\mathcal{C}$ w.r.t. $T^0$. For the proof of Proposition \ref{L2}, Low constructs a metric  which induces $T^0$  and remarked that one can find metrics which induce the topologies of convergence to any degree of smoothness (by means of the slicing of the jet bundles over the spacetime manifold $M$).
	
	Next, consider a spacetime $(M,g)$ of the form $M=\mathcal{I}\times\Sigma$, with $\mathcal{I}$ being an interval in $\R$ and $\Sigma$ a smooth $n$-dimensional manifold. The spatial slices $\Sigma_t=\Sigma\times\{t\}$ are spacelike submanifolds. We call such a spacetime a \emph{sliced space}. 
	In regards to the foliation, in \cite{ghc2}, sliced spaces were considered to have uniformly bounded lapse, shift and spatial metric, in order to achieve the equivalence of global hyperbolicity of $(M,g)$ with the completeness of the slice $(\Sigma, g_{\Sigma})$ (Theorem 2.1). Being motivated by this result, the authors of \cite{Kurt}  consider global topological conditions, for showing the equivalence of the global hyperbolicity of $(M,g)$ with the existence of a foliation 
	$(\Sigma_t,g_{\Sigma_t})$ being $T_A$-complete. Theorem \ref{Kurt}, below, differs from Theorem 2.1 in \cite{ghc2} in that the slices   are complete Riemannian manifolds (with uniformly bounded spatial metric, lapse  and shift  functions) while  the slices are $T_A$-complete in \cite[Theorem 2.1.]{Kurt}.

	
	\begin{theorem}\label{Kurt}
		Let $(M,g)$ be a sliced space, equipped with its natural product topology $T_P$, where $M = \mathbb{R}\times \Sigma$,  $\Sigma$ is an $n$-dimensional manifold ($n\ge 2$)and $g$ the $n+1$-Lorentz ``metric'' on $M$.  Let also $T_A$ be the Alexandrov spacetime topology on $M$. Then, the following statements are equivalent:\\
		\begin{enumerate}
			
			\item $(M,g)$ is globally hyperbolic.\\
			
			\item For every basic-open set $D \in T_A$, there exists a basic-open set $B \in T_P$, such that $D \subset B$.
			\\
			\item $(M_t,g_t)$ is complete with respect to $T_A$. 
			
		\end{enumerate}
		
	\end{theorem}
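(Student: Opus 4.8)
The plan is to establish the cycle of implications $(1) \Rightarrow (2) \Rightarrow (3) \Rightarrow (1)$, exploiting the sliced structure $M = \mathbb{R}\times\Sigma$ throughout. For $(1) \Rightarrow (2)$, I would start from the assumption that $(M,g)$ is globally hyperbolic and use the characterization that causal diamonds $J^+(x)\cap J^-(y)$ are compact. A basic-open set $D \in T_A$ is an Alexandrov interval $(a,b) = I^+(a)\cap I^-(b)$; its closure sits inside the compact diamond $[a,b]$. The aim is to cover this compact set by a single product-open box $B = (t_1,t_2)\times U \in T_P$, where $U \subset \Sigma$ is open. The key tool here is the compactness of $[a,b]$: its projection to the time factor $\mathbb{R}$ is a compact interval, and its projection to $\Sigma$ is contained in some relatively compact open set by the uniform bounds on the lapse, shift and spatial metric. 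I would make the containment $D \subset B$ explicit by bounding how far a timelike curve from $a$ to $b$ can travel in the $\Sigma$-direction per unit coordinate time, which is controlled precisely by these uniform bounds.

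For $(2) \Rightarrow (3)$, the goal is to show each slice $(M_t,g_t)$ is $T_A$-complete, i.e.\ complete with respect to the Alexandrov topology. The containment condition in $(2)$ says that the Alexandrov topology is, in a suitable sense, subordinate to the product topology on basic opens; I would translate this into a statement that Alexandrov-Cauchy sequences confined near a slice cannot escape to ``infinity'' in the product sense. Concretely, I would take a $T_A$-Cauchy sequence in a slice and use $(2)$ to trap the corresponding causal diamonds inside product boxes, which are well-behaved under the product metric, thereby forcing the sequence to converge. The uniform boundedness of the spatial metric is what guarantees that $T_A$-Cauchy and metric-Cauchy conditions on the slice align.

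The implication $(3) \Rightarrow (1)$ is where I expect the heart of the argument to lie, and it is the natural analogue of Theorem~2.1 of \cite{ghc2} with completeness replaced by $T_A$-completeness. I would prove global hyperbolicity by verifying that every causal diamond is compact, constructing from a causal curve joining two events a corresponding path in a slice and showing, via the $T_A$-completeness of $(M_t,g_t)$ together with the uniform bounds, that causal curves cannot have accumulation-free escape; a limit-curve argument then yields compactness of the diamonds. Because $n \ge 2$, I can also invoke the Hounnonkpe--Minguzzi theorem to bypass verifying strong causality separately, reducing global hyperbolicity to compactness of causal diamonds in $T_A$. \textbf{The main obstacle} will be step $(3) \Rightarrow (1)$: translating completeness of the slice with respect to the \emph{order-theoretic} Alexandrov topology into compactness of causal diamonds in the \emph{full} spacetime requires carefully relating a $T_A$-Cauchy sequence in a single slice $\Sigma_t$ to the limiting behaviour of causal curves that wander across many slices, and it is precisely the uniform bounds on lapse and shift that must be invoked to keep this cross-slice comparison under control.
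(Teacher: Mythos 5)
The paper does not actually prove Theorem~\ref{Kurt}: it is quoted from \cite{Kurt} without an argument, so there is no in-paper proof to compare yours against. Judged on its own terms, your plan has a plausible overall shape (a cycle of implications, compactness of causal diamonds, a limit-curve argument, and the Hounnonkpe--Minguzzi theorem to dispense with strong causality, which is legitimate here since $\dim M=n+1\ge 3$ and $\mathbb{R}\times\Sigma$ is non-compact), but it has concrete gaps. First, you repeatedly invoke ``uniform bounds on the lapse, shift and spatial metric'' as the key tool in all three implications. These bounds are \emph{not} hypotheses of Theorem~\ref{Kurt}; the paper states explicitly that this theorem differs from \cite[Theorem~2.1]{ghc2} precisely in that the geometric uniform-boundedness assumptions are replaced by the topological condition of $T_A$-completeness of the slices. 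An argument that reimports those bounds proves the older result of \cite{ghc2}, not this one.

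Second, the pivot of the whole theorem is the phrase ``complete with respect to $T_A$'' in item (3), and you use ``$T_A$-Cauchy sequence'' as if it were self-explanatory. Completeness is not a purely topological notion: it requires a metric or a uniformity, and the Alexandrov topology carries neither by default. Any correct proof must begin by fixing the definition of $T_A$-completeness used in \cite{Kurt} and explaining how it interacts with the Riemannian structure $g_t$ on the slice; without that, your steps $(2)\Rightarrow(3)$ and $(3)\Rightarrow(1)$ are restatements of the goal rather than arguments. Third, item (2) as literally written is satisfied trivially by taking $B=\mathbb{R}\times\Sigma$, which is itself a basic open set of the product topology; your construction of a bounded box $(t_1,t_2)\times U$ containing a causal diamond therefore targets a strictly stronger statement than (2) asserts. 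This signals that the intended content of (2) (presumably a genuine comparison or mutual-refinement condition between $T_A$ and $T_P$) must be pinned down before the implication $(2)\Rightarrow(3)$ can carry any weight.
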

	$\,$\\	
	{{\bf{Question 2:}} As we have seen in this section, there are many different candidates for  spacetime topologies is, other than the manifold one. Is there a fruitful\footnote{Fruitful in the sense of \cite{Zeeman1}, \cite{gobel} and \cite{Hawking-Topology}. } and physically meaningful spacetime topology that was not considered so far? What are the criteria for choosing it? For example, how should such a topology be related to the foliation of a spacetime? These questions certainly need a more systematic work, because even if they can touch important problems (for example the singularity problem in relativity theory as well as Penrose's cosmic censorship conjecture), they seem underestimated until now.
		\\\\
		{{\bf{Question 3:}} In the light of Theorem \ref{Martinfinal}, we ask whether the topology of a spacetime can be reconstructed, in some technical and rigorous way, from the topology of an interval domain. The spacetime topologies that we have mentioned in this section, like for example the Zeeman fine topology or its generalization by G\"obel, or the one by Hawking-King-McCarthy are not metrizable; does this give us any insight on the appropriate candidate for a spacetime topology (other than the manifold topology), given that interval domains are order-theoretic objects?
			
			{
				\section{The Klein-Gordon Equation in Globally Hyperbolic Manifolds}
				In what follows, we  consider the Klein-Gordon operator on a Lorentzian manifold $(M,g)$ minimally coupled
				to an electromagnetic potential $A$ and with a scalar potential $Y$,
				\begin{align} K\phi:=
					\left(({\sqrt{|g|}})^{-1}D^A_{\mu}(\sqrt{|g|}g^{{\mu}{\nu}}D^A_{\nu})+Y\right)\phi=0\label{eq:kgop}
				\end{align}
				where ${|g|}=\text{det}(g_{\mu\nu})$ and $D^A_{\mu}=(i\partial-A)_{\mu}$.
				Next, we insert the form of the metric given in Theorem~\ref{T1} and, instead of working with the operator $K$ (from~\eqref{eq:kgop}), it is more convenient to work with the operator
				$$\tilde{K}:=N\,K\,N$$ with $N$ being as before the Lapse function and where   $\tilde{K}$ can be expressed as 
				\begin{equation}
					\tilde{K}=-(D_t+W^{*})(D_t+W)+L 
				\end{equation}
				with
				\begin{align*}
					W:= -A_0-\frac{1}{2}(N\,\vert g_{\Sigma} \vert^{1/2})^{-1}D_t\,(N\,\vert g_{\Sigma} \vert^{1/2}),
				\end{align*}
				and   $L(t)$ is the spatial Klein-Gordon operator given by 
				\begin{equation}\label{eqop}
					L(t)=-N\,(\sqrt{|g_{\Sigma}|})^{-1}(D-A)^{*}_{i}(   \sqrt{|g_{\Sigma}|}  \,N   g^{ij}_{\Sigma}\,    (D-A)_{j} )+  N^2 \,Y.
				\end{equation}
				Next, we define the operator $B$ by
				\begin{align*}
					B(t)=\begin{matrix}
						\left( \begin{array}{rrrr}
							W(t) & \I \,\,\,\,\,\, \\
							L(t) & W(t)   
						\end{array}\right).
					\end{matrix}
				\end{align*}
				Let  $u_1(t)=u(t)$ and $u_2(t)= -(D_t + W(t))\,u(t)$ be the Cauchy data for $u$ at time $t$, then 
				\begin{align*} 
					\left(\partial_t+iB(t)\right)
					\begin{matrix}
						\left(\begin{array}{r}
							u_1(t)  \\
							u_2(t)  
						\end{array}\right)=0
					\end{matrix}
				\end{align*}
				only if $u$ is a (weak)  solution of the Klein-Gordon equation $\tilde{K}u=0$. Then the Hamiltonian is given by the multiplication of $B$ with the matrix $Q$
				\begin{align*}
					Q=\begin{matrix}
						\left( \begin{array}{rrrr}
							0 & \I  \\
							\I  & 0   
						\end{array}\right)
					\end{matrix}
				\end{align*}
				i.e.~$H(t)=Q\,B(t)$. Since   $W(t)$ is simply a function the problem of proving self-adjointness of the Hamiltonian reduces to proving self-adjointness of  $L(t)$ for all   $t$. 
				
				In order to simplify  the nature of the problem, let us assume that the electromagnetic potential $A$ is equal to zero, the lapse function $N$ is equal to one and the metric $g_{\Sigma}$ is time independent. Let us further assume time independence and positivity for the scalar potential $Y$. By taking the assumptions into account, the operator $L$  simply is the standard Laplace-Beltrami operator w.r.t.\ the Riemannian manifold $(\Sigma,g_{\Sigma})$. The  results by  \cite{STR83} then state that    the geodesic completeness of the Riemannian manifold imply the essential self-adjointness of the Laplace-Beltrami operator  $L$. Since the Riemannian manifold stems from a globally hyperbolic spacetime it is a priori not clear why it should be geodesically complete. At this point let us state  \cite[Proposition~5.3]{K78}.
				\begin{proposition}\label{propk}
					Let  $(M,g)$ be a Lorentzian manifold  with metric tensor 
					\begin{equation}
						g=-dt^2+g_{\Sigma,ij}(\vec{x})dx^idx^j.
					\end{equation} 
					Then  the 	manifold $(M,g)$ is globally hyperbolic if and only if the Riemannian manifold  $(\Sigma,g_{\Sigma})$ is complete. 
				\end{proposition}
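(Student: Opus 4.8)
The plan is to reduce both implications to a single explicit description of the causal order of the ultrastatic metric $g=-dt^2+g_{\Sigma,ij}(\vec x)\,dx^i dx^j$ on $M=\R\times\Sigma$, and then to read everything off from the Hopf--Rinow theorem. The central fact I would establish first is the \emph{causal characterization}: for $p=(t_1,x_1)$ and $q=(t_2,x_2)$ one has $p\prec q$ if and only if $t_1\le t_2$ and $d_\Sigma(x_1,x_2)\le t_2-t_1$, where $d_\Sigma$ denotes the Riemannian distance of $(\Sigma,g_\Sigma)$. The forward direction needs no completeness: along any future-directed causal curve $\gamma(s)=(t(s),x(s))$ the condition $g(\dot\gamma,\dot\gamma)\le0$ reads $|\dot x|_{g_\Sigma}\le\dot t$, so using $t$ as parameter the spatial projection has length $\int_{t_1}^{t_2}|dx/dt|_{g_\Sigma}\,dt\le t_2-t_1$, whence $d_\Sigma(x_1,x_2)\le t_2-t_1$. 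The converse is exactly where completeness enters: by Hopf--Rinow a minimizing geodesic $\sigma$ from $x_1$ to $x_2$ of length $\ell=d_\Sigma(x_1,x_2)\le t_2-t_1$ exists, and traversing $\sigma$ at constant speed while letting $t$ increase linearly from $t_1$ to $t_2$ yields a causal curve from $p$ to $q$.

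For the implication ``completeness $\Rightarrow$ global hyperbolicity'' I would first note that $t:\R\times\Sigma\to\R$ is a smooth time function (its gradient is timelike, as $g^{tt}=-1$), so $M$ is stably, hence strongly, causal; alternatively, taking $\Sigma$ non-compact and $\dim M>2$ one may drop the causality hypothesis altogether by the Hounnonkpe--Minguzzi theorem. It then remains to show that each causal diamond $J^+(p)\cap J^-(q)$ is compact. By the forward characterization any $(t,x)$ in this diamond satisfies $t_1\le t\le t_2$ and $d_\Sigma(x_1,x)\le t-t_1\le t_2-t_1$, so the diamond lies inside the product $[t_1,t_2]\times\overline B_{d_\Sigma}(x_1,t_2-t_1)$; the closed metric ball is compact by Hopf--Rinow, hence so is the product. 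Using the full characterization (both directions being valid here by completeness), the sets $J^\pm$ are closed, so the diamond is a closed subset of a compact set and is therefore compact, giving global hyperbolicity.

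For the converse I would argue by contraposition. If $(\Sigma,g_\Sigma)$ is incomplete, choose a unit-speed geodesic $\sigma:[0,L)\to\Sigma$ with $L<\infty$ that is inextendible at $L$; being arc-length parametrized it is $d_\Sigma$-Cauchy as $s\to L$, and inextendibility forces $\lim_{s\to L}\sigma(s)$ not to exist in $\Sigma$. Its lift $\gamma(s)=(s,\sigma(s))$ is a future-directed \emph{null} geodesic of $(M,g)$ (the product of the affine geodesic $\sigma$ with the affine $t(s)=s$), and it is future-inextendible in $M$ because its $\Sigma$-component has no limit. Setting $p=(0,\sigma(0))$ and $q=(2L,\sigma(0))$, the estimate $d_\Sigma(\sigma(s),\sigma(0))\le s\le L$ together with the forward characterization shows $\gamma(s)\in J^+(p)\cap J^-(q)$ for all $s\in[0,L)$, so this inextendible causal curve is imprisoned in the diamond. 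Were $M$ globally hyperbolic, the diamond would be compact; but a strongly causal spacetime admits no inextendible causal curve imprisoned in a compact set, a contradiction that forces $\Sigma$ to be complete.

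The step I expect to be the main obstacle is the converse direction of the causal characterization, since this is precisely where completeness must be invoked: producing an actual causal curve realizing $d_\Sigma(x_1,x_2)\le t_2-t_1$ requires a length-minimizing geodesic, whose existence is guaranteed by Hopf--Rinow for complete $(\Sigma,g_\Sigma)$. This same input underlies both the compactness of the metric balls and the closedness of $J^\pm$ in the first implication. Everything else is routine: the inequality $|\dot x|_{g_\Sigma}\le\dot t$ is immediate from the signature, the time-function argument for strong causality is standard, and the non-imprisonment property of strongly causal spacetimes is classical (see \cite{Hawking-Ellis, Penrose-difftopology}).
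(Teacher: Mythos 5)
Your proof is correct. Be aware, though, that the paper itself contains no proof of Proposition \ref{propk}: the statement is quoted from Kay \cite[Proposition~5.3]{K78} and used as a black box, so there is no in-paper argument to compare yours against. Your self-contained argument is the standard one for ultrastatic metrics and all the essential ingredients are in place: the causal characterization $p \prec q$ iff $t_1\le t_2$ and $d_\Sigma(x_1,x_2)\le t_2-t_1$ (necessity free of completeness, sufficiency via a minimizing geodesic from Hopf--Rinow); compactness of $[t_1,t_2]\times\overline B_{d_\Sigma}(x_1,t_2-t_1)$ plus closedness of $J^\pm$ for the diamonds; and, for the converse, the inextendible null lift $s\mapsto(s,\sigma(s))$ of a finite-length inextendible geodesic, imprisoned in a diamond, contradicting the non-imprisonment property of strongly causal spacetimes. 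One small imprecision worth fixing: in the last step you appeal to ``the forward characterization'' to place $\gamma(s)$ in $J^-(q)$, but that direction of the characterization only yields necessary conditions for causal relatedness; what you actually need is the sufficiency direction, which is available here \emph{without} completeness because the inequality $d_\Sigma(\sigma(s),\sigma(0))\le L < 2L-s$ is strict, so an almost-minimizing path traversed at speed strictly less than one produces an explicit timelike curve from $\gamma(s)$ to $q$. With that reading the argument is complete, and it supplies a proof the paper leaves to the literature.
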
 
				Next, it follows from Theorem \ref{T5} that the simplified Laplace-Beltrami operator we considered is an essentially self-adjoint operator on $C_0^ {\infty}(\Sigma)$. 
				
				In order to prove essential self-adjointness for the case when $N$ is not equal to one, in the absence of an electromagnetic potential,  we used techniques from weighted manifolds and reduced the problem to the following theorem  \cite[Theorem 4.1]{MO}.  
				\begin{theorem}\label{mt}  
					Let the Riemannian manifold $(\Sigma, N^{-2}g_{\Sigma})$   be geodesically  complete and let  the scaled potential     $N^2V \in L^2_{loc}(\Sigma)$ 	point-wise.  
					Furthermore, let the operator $L$ (from  (\ref{eqop})) be semi-bounded from below. Then  the operator $L$
					is   essentially self-adjoint  on $C_0^{\infty}(\Sigma)$.   
				\end{theorem}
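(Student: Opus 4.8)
The plan is to reduce Theorem~\ref{mt} to the weighted essential self-adjointness statement of Theorem~\ref{T5} by recognizing the operator $L$ of~(\ref{eqop}), specialized to vanishing gauge field $A=0$, as a weighted Laplace--Beltrami operator on a \emph{conformally rescaled} Riemannian manifold. The decisive hint is that completeness is hypothesized for $(\Sigma, N^{-2}g_{\Sigma})$ rather than for $(\Sigma, g_{\Sigma})$: this tells us the correct auxiliary geometry is $\hat g := N^{-2} g_{\Sigma}$, and all that remains is to find the density making the two operators literally coincide.

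First I would set up the ansatz in local coordinates. With $\hat g_{ij} = N^{-2} g_{\Sigma,ij}$ on the $n$-manifold $\Sigma$ one has $\hat g^{ij} = N^{2} g_{\Sigma}^{ij}$ and $\sqrt{|\hat g|} = N^{-n}\sqrt{|g_{\Sigma}|}$. I would then write out $-\Delta_{\hat g,\hat\mu}$ using the defining formula~(\ref{eq:wlbop}) for an undetermined smooth positive density $\hat\rho$ and match it term by term against the principal part of $L$, namely $-N(\sqrt{|g_{\Sigma}|})^{-1}\partial_i(N\sqrt{|g_{\Sigma}|}\,g_{\Sigma}^{ij}\partial_j)$. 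Both the outer prefactor and the inner coefficient force the same choice $\hat\rho = N^{\,n-1}$, yielding the measure $d\hat\mu = N^{-1}\sqrt{|g_{\Sigma}|}\,dx$ and the exact operator identity $L = -\Delta_{\hat g,\hat\mu} + N^{2} Y$ on $C_0^{\infty}(\Sigma)$. (Since $A=0$, the factors of $i$ in $D_i = i\partial_i$ cancel inside the formally self-adjoint combination $D^{*}_i(\cdots D_j)$, so this agrees with the $(\partial-A)$-convention of Theorem~\ref{T5}.)

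Next I would check the hypotheses of Theorem~\ref{T5} for the weighted manifold $(\Sigma,\hat g,\hat\mu)$: the manifold $(\Sigma, N^{-2}g_{\Sigma})$ is geodesically complete by assumption; the density $\hat\rho = N^{\,n-1}$ is smooth and everywhere positive because $N>0$ is smooth; the potential appearing in the reduction is exactly the scaled potential $N^{2}Y$, which lies in $L^2_{loc}(\Sigma)$ by hypothesis (and local square-integrability is insensitive to the smooth, locally bounded passage between $d\hat\mu$ and $\sqrt{|g_{\Sigma}|}\,dx$); and $L$ is semi-bounded from below by assumption. Theorem~\ref{T5} with trivial gauge field then delivers essential self-adjointness of $L$ on $C_0^{\infty}(\Sigma)$ in the Hilbert space $L^2(\Sigma,\hat\mu)$, completing the argument.

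The coordinate computation identifying $\hat\rho$ is routine; the only genuine difficulty is conceptual rather than technical, namely guessing the correct conformal weight and density so that the manifold on which completeness is \emph{assumed} is precisely the one whose completeness Theorem~\ref{T5} \emph{requires}. The point most likely to need extra care is verifying that the Hilbert space $L^2(\Sigma, N^{-1}\sqrt{|g_{\Sigma}|}\,dx)$ produced by this reduction is indeed the one in which self-adjointness of the spatial operator $L$ is needed for the Hamiltonian $H = Q\,B$; reconciling this weighted inner product with the Klein--Gordon symplectic structure is the step I would expect to require the most justification.
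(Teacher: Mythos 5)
Your reduction is correct and is essentially the paper's own argument: the paper defers the proof of Theorem~\ref{mt} to \cite{MO}, but the generalization proved immediately afterwards uses exactly your rescaling $\tilde{g}_{\Sigma}=N^{-2}g_{\Sigma}$, $d\tilde{\mu}=N^{-2}d\mu$ (equivalently your density $\hat\rho=N^{n-1}$, $d\hat\mu=N^{-1}\sqrt{|g_{\Sigma}|}\,dx$) to write $L=-\Delta^{A}_{\tilde{g}_{\Sigma},\tilde{\mu}}+N^{2}Y$ and then invokes Theorem~\ref{T5}. Your coordinate identification of the weight and your verification of the hypotheses match the paper's computation.
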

				\begin{proof}
					For the proof see \cite{MO}.
				\end{proof}
				Next, we  generalize the former result to the case of nonvanishing electromagnetic potential and general lapse function. 
				\begin{theorem}
					Let the Riemannian manifold $(\Sigma,\tilde{g}_{\Sigma}:= N^{-2}g_{\Sigma})$   be geodesically  complete and let  the scaled potential     $N^2V \in L^2_{loc}(\Sigma)$ 	point-wise.  
					Furthermore,   let $A\in\Lambda^1_{(1)}(\Sigma)$ and let the  operator  \begin{equation*}
						L(t)=-N\,(\sqrt{|g_{\Sigma}|})^{-1}(D-A)^{*}_{i}(   \sqrt{|g_{\Sigma}|}  \,N   g^{ij}_{\Sigma}\,    (D-A)_{j} )+  N^2 \,Y.
					\end{equation*} be semi-bounded from below. Then  the operator $L$ (from   (\ref{eqop}))
					is   essentially self-adjoint  on $C_0^{\infty}(\Sigma )\subset L^{2}(\Sigma,\,\tilde{\mu})$. \end{theorem}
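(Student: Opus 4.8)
The plan is to reduce the statement to Theorem~\ref{T5} by a single conformal rescaling that simultaneously absorbs the lapse function $N$ and the metric determinant into a weighted Riemannian manifold, exactly as in the proof of Theorem~\ref{mt}, but now retaining the gauge field $A$. The key point is that the principal part of $L(t)$ in \eqref{eqop} becomes, after this rescaling, precisely the minimally coupled weighted Laplace-Beltrami operator $-\Delta^{A'}_{\tilde{g}_{\Sigma},\tilde{\mu}}$ associated with the rescaled data, so that the analytic conclusion is inherited directly from \cite{S01}.

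First I would introduce the rescaled metric $\tilde{g}_{\Sigma}=N^{-2}g_{\Sigma}$, for which $\tilde{g}^{ij}_{\Sigma}=N^{2}g^{ij}_{\Sigma}$ and $\sqrt{|\tilde{g}_{\Sigma}|}=N^{-n}\sqrt{|g_{\Sigma}|}$ on the $n$-dimensional slice, and equip $\Sigma$ with the smooth, everywhere positive weight $\tilde{\rho}=N^{n-1}$, producing the weighted measure $d\tilde{\mu}=\tilde{\rho}\sqrt{|\tilde{g}_{\Sigma}|}\,dx=N^{-1}\sqrt{|g_{\Sigma}|}\,dx$. Since $N$ is smooth and positive, $(\Sigma,\tilde{g}_{\Sigma},\tilde{\mu})$ is a genuine weighted manifold. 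I would then verify the operator identity by direct substitution; writing the minimally coupled weighted operator for a complex gauge field $A'$ to be fixed below,
\[
-\Delta^{A'}_{\tilde{g}_{\Sigma},\tilde{\mu}}
=-(\tilde{\rho}\sqrt{|\tilde{g}_{\Sigma}|})^{-1}(\partial-A')^{*}_{i}\big(\sqrt{|\tilde{g}_{\Sigma}|}\,\tilde{\rho}\,\tilde{g}^{ij}_{\Sigma}\,(\partial-A')_{j}\big),
\]
and inserting the three relations above, the prefactor collapses to $N(\sqrt{|g_{\Sigma}|})^{-1}$ while the bracketed factor collapses to $\sqrt{|g_{\Sigma}|}\,N\,g^{ij}_{\Sigma}$, reproducing the principal part of \eqref{eqop}. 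The powers of $N$ match only for the dimension-adapted choice $\tilde{\rho}=N^{n-1}$, which is the one delicate point of the bookkeeping.

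It remains to pin down $A'$ and the potential. Since $D=i\partial$, one has $(D-A)_{j}=i(\partial-A')_{j}$ with the complex one-form $A'=-iA$, and the accompanying adjoints contribute factors $-i$ and $i$ that cancel, so the identification holds with no residual first-order drift term; here I would also confirm that the formal adjoint $(D-A)^{*}_{i}$ in \eqref{eqop} is taken with respect to the same reference measure as $(\partial-A')^{*}_{i}$ in Theorem~\ref{T5}. Because $A\in\Lambda^{1}_{(1)}(\Sigma)$ forces $A'=-iA\in\Lambda^{1}_{(1)}(\Sigma)$, the gauge-field hypothesis of Theorem~\ref{T5} is satisfied. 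The remaining summand $N^{2}Y$ is exactly the scaled potential, which lies in $L^{2}_{loc}(\Sigma)$ by assumption (a condition insensitive to the smooth positive factor $N$), so it plays the role of the $L^{2}_{loc}$ potential in Theorem~\ref{T5}.

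Finally it remains only to match the remaining hypotheses: the assumed geodesic completeness of $(\Sigma,\tilde{g}_{\Sigma})$ supplies the completeness assumption of Theorem~\ref{T5}, and the assumed semi-boundedness of $L$ supplies the last one, whence Theorem~\ref{T5} yields essential self-adjointness of $L$ on $C_0^{\infty}(\Sigma)\subset L^{2}(\Sigma,\tilde{\mu})$. I expect the main obstacle to be precisely the clean verification of the operator identity in the second and third steps: showing that the conformal change together with the weight $N^{n-1}$ reproduces all lapse factors of \eqref{eqop} with no leftover drift, and that the gauge field passes through the rescaling unchanged up to the harmless factor of $i$. Once this purely algebraic reduction is secured, all the analysis is imported from Theorem~\ref{T5}.
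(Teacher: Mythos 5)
Your proposal is correct and follows essentially the same route as the paper: both identify $L$ (minus the potential term $N^2Y$) with a minimally coupled weighted Laplace--Beltrami operator for the conformally rescaled metric $\tilde{g}_{\Sigma}=N^{-2}g_{\Sigma}$ and the measure $d\tilde{\mu}=N^{-1}\sqrt{|g_{\Sigma}|}\,dx$, and then invoke Theorem~\ref{T5}. The only difference is presentational: the paper factors the argument through the intermediate form $L=-N^{2}\Delta^{A}_{g_{\Sigma},\mu}+N^{2}Y$ and cites \cite[Proposition 3.1]{MO} for the conformal bookkeeping, whereas you carry out that bookkeeping explicitly via the dimension-adapted weight $\tilde{\rho}=N^{n-1}$, which you verify correctly.
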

				\begin{proof}
					We   rewrite the operator $L$   as a weighted Laplace-Beltrami operator multiplied by the Lapse function, i.e. 
					\begin{align*}
						L=-N^2 {\Delta}^{A}_{{g}_{\Sigma},\mu}+N^2Y.
					\end{align*}
					By redefining   the metric and measure $\mu$ as  
					\begin{align*}
						\tilde{g}_{\Sigma}:= N^{-2}g_{\Sigma}, \qquad d\tilde{\mu}=N^{-2}d\mu ,
					\end{align*}
					the operator $L$ reads  (for the proof see \cite[Proposition 3.1]{MO})
					\begin{align*}
						L=-  {\Delta}^{A}_{\tilde{g}_{\Sigma},\tilde{\mu}}+N^2\,Y.
					\end{align*}
					By rewriting the operator as a minimally coupled, weighted Laplace-Beltrami operator, the condition of positivity of the potential and Theorem \ref{T5} imply essential self-adjointness. 
				\end{proof} 
				Hence, \emph{the problem of proving essential self-adjointness of the operator $L$    is reduced to proving geodesic completeness of the Riemannian manifold $(\Sigma, N^{-2}g_{\Sigma})$}. In
				all globally hyperbolic static spacetimes completeness holds due to Proposition \ref{propk}. In the case of
				stationary, globally hyperbolic spacetimes (i.e.\ with non-vanishing shift vector), Theorem \ref{mt} has been
				generalized under the assumption   that the Killing vector
				field $X(=\partial_t)  = N n(\Sigma) + N^i\partial_i$ is everywhere timelike (where $n(\Sigma)$ is a unit future-pointing normal	vector field on the hypersurface $\Sigma$),  \cite[Theorem 4.3.]{FMO}.
				\begin{theorem}\label{thm:51}
					Let $(M,g)$ be a stationary, globally hyperbolic spacetime with metric
					\begin{align*}
						g=-N^2(\vec{x})dt^2+g_{ij}(\vec{x})\,(dx^i+N^i\,dt)\,(dx^j+N^j\,dt).
					\end{align*}
					Then the Riemannian manifold w.r.t.\ the conformally transformed metric $(\Sigma, \mathbf{\tilde{h}})$ with
					\[ \tilde{h}_{ij}= N^{-2}{g}_{ij}+ (1-  N^{-2}\vec{N}^2)^{-1}N^{-4}N_iN_j \:, \]
					is complete.
				\end{theorem} In this context we refer the reader to  \cite{SR} as well, where global hyperbolicity of stationary metrics have been studied in the context of Finsler geometry. Furthermore, a full case study is given in \cite{SR2}, if the Killing vector field $X$ is not everywhere timelike.    
			\par
				Due to the geodesic completeness of the Riemannian manifold $(\Sigma,\mathbf{\tilde{h}})$, one can prove that the weighted Laplace-Beltrami operator that stems from the Klein-Gordon equation is essentially self-adjoint on the domain $C_0^{\infty}(\Sigma)$.
				
				In the case of non-stationary spacetimes, however, the connection between global hyperbolicity
				and self-adjointness of the spatial Laplacian is still unclear. Since any globally hyperbolic spacetime $(M,g)$ can be brought into the following form,
				\begin{equation*} 
					g=-N^2 (t,\vec{x})dt^2+g_{\Sigma,ij}(t,\vec{x})dx^idx^j,
				\end{equation*}
				and  keeping in mind that conformal transformations do not change the causal structure of the manifold, we may restrict attention to the conformally transformed spacetime $(M,\tilde{g})$ 
				(which is again globally hyperbolic) with
				\begin{align*} 
					\tilde{g}&=-  dt^2+N^{-2}g_{\Sigma,ij}(t,\vec{x})dx^idx^j\\&=-  dt^2+  \tilde{g}_{\Sigma,ij}(t,\vec{x})\,dx^idx^j
					.
				\end{align*}
				Our strategical reason  for the conformal transformation is  our interest in the connection of global hyperbolicity and the geodesic completeness of the Riemannian manifold $(\Sigma_t,\tilde{g}_{\Sigma_t})$ for all $t\in\R$. For three large classes of Lorentzian manifolds, the condition of global hyperbolicity is equivalent to  the geodesic completeness: $\,$\newline
				\begin{itemize}
					\item Static case. Lapse function and the spatial metric are time-independent and thus   Proposition~\ref{propk} leads to the equivalence. $\,$\newline 
					\item The case of warped manifolds $M=\R\times_f \Sigma$, see
					\cite[Theorem 3.66.]{BM96} or \cite[Lemma A.5.14.]{bgp}   where the following result  holds.\\ 
					\begin{theorem}
						Let $(\Sigma, g_{\Sigma})$ be a Riemannian manifold  and let $I = (a, b)$
						with $-\infty \leq a < b \leq +\infty$. Furthermore, let $f: I\mapsto (0,\infty)$ be a smooth function and the metric $g$ be given by the warped product $$g=-dt^2+f(t)\,g_{\Sigma}.$$
						Then,  the Lorentzian warped product $(I\times_f   \Sigma, g)$ 
						is globally hyperbolic iff $(\Sigma, g_{\Sigma})$
						is complete. 	
					\end{theorem}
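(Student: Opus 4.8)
The plan is to reduce the warped product to a metric product by a conformal change of the time coordinate, and then to invoke a mild generalization of Proposition~\ref{propk}. Since global hyperbolicity depends only on the causal structure, and conformal rescalings leave the light cones—and hence the causal relation $\prec$, strong causality, and the compactness of the diamonds $J^+(p)\cap J^-(q)$—unchanged, it suffices to analyse any metric conformal to $g$.

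First I would introduce the new time coordinate
$$
\tau = F(t) := \int_{t_0}^{t}\frac{ds}{\sqrt{f(s)}}, \qquad t_0\in I.
$$
Since $f>0$ is smooth, $F$ is a smooth strictly increasing diffeomorphism of $I=(a,b)$ onto an open interval $\tilde I := F(I)\subseteq\R$, whose endpoints may be finite or infinite according to the convergence of the integral at $a$ and $b$. In the coordinate $\tau$ one has $dt=\sqrt{f}\,d\tau$, so that
$$
g = -f(t)\,d\tau^2 + f(t)\,g_\Sigma = f(t)\,\bigl(-d\tau^2 + g_\Sigma\bigr).
$$
Thus $g$ is conformal to the product metric $\bar g := -d\tau^2 + g_\Sigma$ on $\tilde I\times\Sigma$, and by conformal invariance $(I\times_f\Sigma,g)$ is globally hyperbolic if and only if $(\tilde I\times\Sigma,\bar g)$ is. Note that the conformal factor $f$ need not be bounded; this is irrelevant, precisely because global hyperbolicity is a conformal invariant.

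It then remains to prove the product statement: $(\tilde I\times\Sigma,\bar g)$ is globally hyperbolic if and only if the fibre $(\Sigma,g_\Sigma)$ is complete. When $\tilde I=\R$ this is exactly Proposition~\ref{propk}, so the real content is to extend it to an arbitrary interval $\tilde I$. For \emph{sufficiency}, assuming $(\Sigma,g_\Sigma)$ complete, I would show that every slice $\{\tau_*\}\times\Sigma$ is a Cauchy surface. Parametrising a future-directed causal curve $\gamma=(\tau,x)$ by $\tau$ (legitimate, since $\bar g(\dot\gamma,\dot\gamma)\le 0$ forces $\dot\tau\ge|\dot x|_{g_\Sigma}>0$), the causality condition gives $|\dot x|_{g_\Sigma}\le 1$, so the $g_\Sigma$-length of the spatial projection on a $\tau$-interval is bounded by its length. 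If the $\tau$-range of an inextendible $\gamma$ failed to exhaust $\tilde I$, the spatial projection would have finite length near that endpoint, hence be $d_\Sigma$-Cauchy and, by completeness, converge to a point of $\Sigma$; then $\gamma$ would converge in $\tilde I\times\Sigma$, contradicting inextendibility. Hence the $\tau$-range is all of $\tilde I$ and $\gamma$ meets each slice exactly once, so the spacetime admits a Cauchy surface and is globally hyperbolic. For \emph{necessity}, if $(\Sigma,g_\Sigma)$ is incomplete then by Hopf--Rinow it is metrically incomplete, and a tail of a non-convergent $d_\Sigma$-Cauchy sequence yields a curve of arbitrarily small $g_\Sigma$-length that escapes every compact subset of $\Sigma$. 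Taking this short escaping curve as the spatial projection of a future-directed causal curve with $\dot\tau=|\dot x|_{g_\Sigma}$ produces an inextendible causal curve confined to an arbitrarily thin $\tau$-window, which can be positioned to avoid a prescribed slice; that slice is then not Cauchy, so global hyperbolicity fails.

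The main obstacle I anticipate is exactly the bookkeeping forced by a possibly \emph{bounded} conformal interval $\tilde I$. One must confirm that merely shrinking the time range does not by itself destroy global hyperbolicity—the completeness argument above must be carried out so that inextendible causal curves run across the \emph{full} extent of $\tilde I$—while in the incomplete case the escaping causal curve must be arranged to fit within the available $\tau$-window, which is why one should invoke metric incompleteness (short escaping curves) rather than a single incomplete geodesic of uncontrolled length. Establishing this product-over-interval equivalence cleanly is the technical heart of the proof; once it is in place, the conformal reduction makes the warped factor $f$ irrelevant and the theorem follows.
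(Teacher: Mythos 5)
The paper itself offers no proof of this theorem --- it is quoted from the literature (\cite[Theorem~3.66]{BM96}, \cite[Lemma~A.5.14]{bgp}) --- so there is no in-paper argument to compare against. Your strategy is the standard textbook one: the substitution $\tau=\int_{t_0}^t ds/\sqrt{f(s)}$ gives $g=f\,(-d\tau^2+g_\Sigma)$, conformal invariance of global hyperbolicity reduces everything to the product $(\tilde I\times\Sigma,\,-d\tau^2+g_\Sigma)$, and your sufficiency direction is correct: the spatial projection of a future-directed causal curve is $1$-Lipschitz in $\tau$, so if $(\Sigma,g_\Sigma)$ is complete an inextendible causal curve must sweep out all of $\tilde I$, every slice $\{\tau_*\}\times\Sigma$ is a Cauchy surface, and global hyperbolicity follows.

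The necessity direction, however, ends in a non sequitur. You build an inextendible causal curve confined to a thin $\tau$-window, observe that it misses a prescribed slice, and conclude ``that slice is then not Cauchy, so global hyperbolicity fails.'' Global hyperbolicity is equivalent to the existence of \emph{some} Cauchy surface, not to the particular slices $\{\tau_*\}\times\Sigma$ being Cauchy; a globally hyperbolic spacetime contains many closed achronal edgeless hypersurfaces that are not Cauchy (a past hyperboloid in Minkowski space, for instance), so disqualifying one family of candidates proves nothing. The standard way to close the argument is to exhibit a non-compact causal diamond, and your own ingredients already suffice: take your short escaping curve $c:[0,\ell)\to\Sigma$ with $c(0)=p$, parametrized by arc length, with $\ell$ small enough that $\tau_0\pm\ell\in\tilde I$. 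Since $d_\Sigma(p,c(s))\le s<\ell$, each point $(\tau_0,c(s))$ lies in $J^+\bigl((\tau_0-\ell,p)\bigr)\cap J^-\bigl((\tau_0+\ell,p)\bigr)$ (connect by a curve with $\dot\tau=\ell/s>1=|\dot c|$), yet for $s_n\to\ell$ the sequence $(\tau_0,c(s_n))$ has no convergent subsequence because $c(s_n)$ is a non-convergent Cauchy sequence in $\Sigma$. Hence that diamond is not compact and the product spacetime is not globally hyperbolic. Your instinct to use arbitrarily short escaping curves (rather than a single incomplete geodesic of uncontrolled length) so that the construction fits inside a bounded $\tilde I$ is exactly right; it just has to feed into the compactness criterion for diamonds rather than into a statement about one slice.
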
 $\,$
					\item Sliced spaces. 
					Assume that the metric $g_{\Sigma,ij}(\vec{x},t)$ is uniformly bounded by the metric $g_{\Sigma,ij}(\vec{x},0)$ for all $t\in\mathbb{R}$ and tangent vectors $u\in T\Sigma$, i.e.\ that   there exist  constants $A,D\in\mathbb{R}>0$ such that			\begin{align*}\label{ineq:metAD}
						A\,g_{\Sigma,ij}(\vec{x},0)u^i\,u^j\leq g_{\Sigma,ij}(\vec{x},t)u^i\,u^j\leq D\,g_{\Sigma,ij}(\vec{x},0)u^i\,u^j.
					\end{align*}
					Then by  \cite[Theorem 2.1]{ghc2} or Theorem \ref{Kurt}, above,  the equivalence of globally hyperbolicity and geodesic completeness follows. 
				\end{itemize}$\,$\\
				In general however, i.e.\ apart from the three classes discussed above, there is no direct connection between geodesic completeness of the hypersurfaces and global hyperbolicity the Lorentzian manifold\footnote{We are indebted to Miguel S{\'a}nchez and Stefan Suhr for providing us with counter examples.}.  In particular,
				there are examples of non-globally hyperbolic spacetimes with geodesically complete hypersurfaces
				(for details see~\cite{MS97}). Nevertheless, in such spacetimes, the essential self-adjointness of the differential operator $L$ holds.
				This makes it possible to construct quantum field theories by employing methods in~\cite{NGH2}.
				In this way, one gets interesting new classes of quantum field theories in non-globally hyperbolic spacetimes.
				Analyzing the properties of the resulting quantum field theories seem interesting also in connection
				with black hole spacetimes and related phenomena.

			\end{document}